\documentclass[11pt, reqno]{amsart}

\usepackage[hmargin=3.9cm,vmargin=4cm]{geometry}

\usepackage{graphicx}
\usepackage[hyphens,spaces,obeyspaces]{url}
\usepackage[colorlinks,allcolors=blue]{hyperref}

\pagestyle{myheadings}

\frenchspacing

\usepackage{amscd,amssymb}
\usepackage[leqno]{amsmath}
\usepackage{tikz}
\usepackage{tikz-3dplot}
\usepackage{tikz-cd}
\usepackage{faktor}
\usepackage{amsthm,amssymb,mathtools}

\usepackage[utf8]{inputenc}
\usepackage{hyperref}
\usepackage{cleveref}

\newtheorem  {theorem}                  {Theorem}
\newtheorem* {theorem*}                   {Theorem}

\newtheorem {lemma}[theorem] {Lemma}
\newtheorem {prop}[theorem]      {Proposition}
\crefname{prop}{proposition}{propositions}
\newtheorem* {prop*}     {Proposition}

\theoremstyle{definition}
\newtheorem {defi}[theorem] {Definition}
\newtheorem {Remark}[theorem]        {Remark}
\newtheorem* {Remark*} {Remark}

\newtheorem* {Example*}    {Example}

\def\R{\mathbb{R}}
\def\Z{\mathbb{Z}}

\newcommand{\pp}[2]{\frac{\partial#1}{\partial#2}}
\newcommand{\norm}[1]{\|#1\|}

\newcommand{\cF}{\mathcal{F}}

\DeclareMathOperator{\const}{const}
\DeclareMathOperator{\Id}{Id}
\newcommand{\shs}{stable Hamiltonian structure }
\renewcommand{\d}{\mathrm{d}}

\title[Non-density results in stable Hamiltonian topology]{Non-density results in high dimensional \\ stable Hamiltonian topology}
\author{Robert Cardona}
\address{Robert Cardona, Departament de Matem\`atiques i Inform\`atica, Universitat de Barcelona, Gran Via de Les Corts Catalanes 585, 08007 Barcelona, Spain. Centre de Recerca Matemàtica, Campus de Bellaterra, Edifici C, 08193, Barcelona, Spain. \it{e-mail: robert.cardona@ub.edu}}
\thanks{RC acknowledges partial support from the AEI grant PID2023-147585NA-I00, the Departament de Recerca i Universitats de la Generalitat de Catalunya (2021 SGR 00697), and the Spanish State Research Agency, through the Severo Ochoa and María de Maeztu Program for Centers and Units of Excellence in R\&D (CEX2020-001084-M)}

\author{Fabio Gironella}
\address{Fabio Gironella, CNRS - Laboratoire de Mathématiques Jean Leray, Nantes Université, France. \it{E-mail: fabio.gironella@cnrs.fr}}
\thanks{FG benefits from the support of the ANR Grant COSY ``New challenges in symplectic and contact topology'', bearing the reference ANR-21-CE40-0002; of the region Pays de la Loire, via the project Étoile Montante 2023 SymFol; and of the French government's “Investissements d’Avenir” program integrated to France 2030, bearing the following reference ANR-11-LABX-0020-01.}

\date{}

\begin{document}

\maketitle

\begin{abstract}
   We push forward the study of higher dimensional stable Hamiltonian topology by establishing two non-density results. 
   First, we prove that stable hypersurfaces are not $C^3$-dense in any isotopy class of embedded hypersurfaces on any ambient symplectic manifold of dimension $2n\geq 8$. 
   Our second result is that on any manifold of dimension $2m+1\geq 5$, the set of non-degenerate stable Hamiltonian structures is not $C^2$-dense among stable Hamiltonian structures in any given stable homotopy class that satisfies a mild assumption. 
   The latter generalizes a result by Cieliebak and Volkov to arbitrary dimensions.
\end{abstract}

\section{Introduction}

A \emph{stable Hamiltonian structure} on a manifold $M$ of odd dimension $2n+1$ is a pair $(\lambda,\omega)$, where $\lambda$ is a one-form and $\omega$ is a closed two-form of maximal rank, satisfying $\lambda\wedge \omega^n > 0$ and $\ker \omega \subset \ker d\lambda$.
These structures can be understood as a generalization of contact forms, since a contact form $\alpha$ on $M$ naturally gives a stable Hamiltonian structure $(\alpha,d\alpha)$. 
Like contact manifolds, stable Hamiltonian structures arise in certain hypersurfaces of symplectic manifolds, named \emph{stable hypersurfaces}. 
These were originally defined by Hofer and Zehnder \cite{HZ} as hypersurfaces that admit a distinguished symplectic tubular neighborhood $U\cong M\times (-\varepsilon,\varepsilon)$ such that the characteristic foliation on each slice $M\times \{t\}$ is conjugate to that of $M$. In analogy with contact topology, \emph{stable Hamiltonian topology} refers to the investigation of the topological properties of stable Hamiltonian structures. 
This term was introduced by Cieliebak and Volkov in their foundational work \cite{CV}, and was defined as the study of the set of stable Hamiltonian structures in $M$ up to homotopy with a fixed cohomology class of $\omega$. 
Each of these homotopy classes is called a \emph{stable homotopy class}. 
We interpret stable Hamiltonian topology in a slightly broader sense; for instance, one can also study the set of embedded stable hypersurfaces on a given symplectic manifold $(W,\Omega)$, and how it lies within the set of all embedded hypersurfaces in $W$. \\

The goal of this article is to push forward the investigation of higher dimensional stable Hamiltonian topology. When the dimension of $M$ is three, fundamental results were established in \cite{CV} and other works investigated the topology \cite{CV2, C} and the dynamics \cite{HT, R, CR} of stable Hamiltonian structures. Stability of hypersurfaces is a recurrent topic in symplectic field theory \cite{BEHWZ, EGH} and more generally in symplectic topology \cite{EKP, CFP2, MP, NW}. In higher dimensions, a few results can be found in \cite{CV} and it was shown in \cite{CFP} that there are examples of six-dimensional symplectic manifolds where stability is not an open condition (see \cite{C} for a general result in four dimensions). 

On the other hand, the question of the density (or non-density) of stable hypersurfaces has not been addressed yet in full generality. In dimension six, the particular symplectic manifolds with boundary constructed by Cieliebak-Frauenfelder-Paternain \cite{CFP} admit open sets (in $C^k$ topology, with $k\geq 2$) of non-stable hypersurfaces. 
As we will see in Section \ref{ss:An}, there are also examples of four-dimensional symplectic manifolds (which are not closed either) where one can find such open sets. 
However, these are just concrete examples of symplectic manifolds and isotopy classes of embedded hypersurfaces where this phenomenon is shown to occur. 
None of them covers an ambient closed symplectic manifold or, say, the standard symplectic Euclidean space (where contact hypersurfaces are not dense even in the Haussdorff topology \cite{C95}).
Given a symplectic manifold $(W,\Omega)$, we denote by $\mathcal{SHS}(W)$ the set of embedded stable hypersurfaces, and by $\mathcal{HS}(W)$ the set of embedded hypersurfaces. Our first result proves that $\mathcal{SHS}(W)$ is \emph{never} $C^3$-dense in $\mathcal{HS}(W)$ (and hence also not $C^k$-dense for all $k\geq 3$) for an arbitrary symplectic manifold $W$ of dimension at least eight and any isotopy class of embedded hypersurfaces.
\begin{theorem}\label{thm:main1}
    Let $(W,\Omega)$ be a symplectic manifold such that $\dim W\geq 8$. Let $M$ be an embedded hypersurface in $W$. Then $M$ is isotopic to a $C^0$-close hypersurface $\widetilde M$ that cannot be $C^3$-approximated by stable hypersurfaces.
\end{theorem}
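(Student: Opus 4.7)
The natural strategy is to construct a local ``non-stability plug'' inside a Darboux chart of $W$ and insert it into $M$ via a $C^0$-small ambient isotopy. Using Darboux's theorem together with the standard neighborhood theorem for hypersurfaces in a symplectic manifold, I would pick a chart $U\cong B^{2n}\subset W$ around a point of $M$ in which $\Omega$ is the standard symplectic form and $M\cap U$ is the coisotropic hyperplane $\{y_n=0\}$. Any compactly supported perturbation of this hyperplane inside a smaller ball $V\Subset U$ extends to an ambient isotopy of $W$ supported in $U$, so it suffices to build a $C^0$-small, compactly supported modification of $\{y_n=0\}$ inside $V$ that cannot be $C^2$-approximated by stable hypersurfaces.

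I would exploit the hypothesis $\dim W\geq 8$ through a product construction. Split $V$ as a symplectic product $V_4\times V_{2n-4}$, where $\dim V_4=4$ and $\dim V_{2n-4}\geq 4$. Using the four-dimensional examples of Section \ref{ss:An}, extract a three-dimensional hypersurface $\Sigma\subset V_4$ whose isotopy class in $V_4$ admits a $C^2$-open set of non-stable hypersurfaces. Define the plug as $\Sigma\times V_{2n-4}$, smoothly interpolated back to $\{y_n=0\}$ outside a compact part of $V$; the resulting $\widetilde M$ is $C^0$-close and smoothly isotopic to $M$, and in the product model its characteristic line field is tangent to the $V_4$ factor.

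For any hypersurface $\widetilde M'$ that is $C^2$-close to $\widetilde M$, I would then produce a codimension-$(2n-4)$ submanifold $N'\subset \widetilde M'\cap V$, $C^2$-close to some $\Sigma\times\{q\}$, along which the ambient characteristic line field of $\widetilde M'$ is tangent to $N'$. If $\widetilde M'$ carried a stable Hamiltonian structure $(\lambda',\omega')$, then the restricted pair $(\lambda'|_{N'},\omega'|_{N'})$ would define a stable Hamiltonian structure on $N'$: maximal rank of $\omega'|_{N'}$ and the positivity condition hold for a generic slice, while the kernel containment $\ker\omega\subset\ker d\lambda$ descends to $N'$ precisely because the ambient characteristic is tangent to $N'$. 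But then $N'$ would be a three-dimensional stable hypersurface $C^2$-close to $\Sigma$ in $V_4$, contradicting the $C^2$-robust non-stability from Section \ref{ss:An}.

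The main obstacle is this last step. Producing a slice $N'$ that is simultaneously $C^2$-close to $\Sigma\times\{q\}$ and tangent to the ambient characteristic of $\widetilde M'$ requires a transversality argument with uniform control of the characteristic line field under $C^2$-perturbations, together with a careful design of the plug so that this line field stays close to the product foliation. Once such a slice is in hand, verifying that the restricted pair genuinely defines a stable Hamiltonian structure on $N'$ is straightforward, and the resulting contradiction closes the proof.
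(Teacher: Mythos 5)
Your overall strategy (insert a three\--dimensional robustly non\--stabilizable Hamiltonian structure as an invariant submanifold of the characteristic foliation, then restrict a putative stabilizing one\--form to it to get a contradiction) is the same as the paper's, but two of your steps have genuine gaps. First, the four\--dimensional examples of Section \ref{ss:An} are \emph{not} hypersurfaces in a Darboux ball of $(\R^4,\omega_{std})$: they are the special open symplectic manifolds $V=N\times(-\varepsilon,\varepsilon)$ with $\omega_V=\iota_X\mu+\d(t\alpha)$, where $X$ is a non $\R$-covered volume\--preserving Anosov flow on a homology sphere $N$. Your plug requires an isosymplectic embedding of such a $(V,\omega_V)$ into a ball of $(\R^4,\omega_{std})$ (a codimension\--zero symplectic embedding), which is not available; indeed, Remark \ref{rem:dim6} explains that even the analogous embedding into $\R^6$ with the required normal form is an open problem, and this is exactly why the theorem is stated for $\dim W\geq 8$. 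The paper circumvents this by using Gromov's $h$-principle for isosymplectic \emph{immersions} of $V^4$ into $\R^6$, resolving double points in $\R^7$, and landing in a hyperplane of $\R^8$ with trivial symplectic normal bundle (Proposition \ref{prop:embR8}); your product splitting $V_4\times V_{2n-4}$ cannot substitute for this.

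Second, the step you flag as ``the main obstacle'' is not a technicality but the crux, and your plug as designed cannot overcome it. In the product model $\Sigma\times V_{2n-4}$ the characteristic foliation is tangent to every slice $\Sigma\times\{q\}$, so you have a whole foliation by invariant slices, none of which is normally hyperbolic; a generic $C^2$-perturbation of the hypersurface perturbs the characteristic line field in a $C^1$-small but otherwise arbitrary way and destroys \emph{all} of these invariant slices, so there is no transversality argument that recovers an invariant $N'$. The paper resolves this by deforming the embedding so that the single copy of $N$ becomes a \emph{normally hyperbolic} invariant submanifold of the characteristic foliation (Proposition \ref{prop:addhyper}, adding a term $C\sum x_iy_i\alpha$ that creates expansion/contraction in the normal directions); persistence of normally hyperbolic invariant manifolds then yields, for any $C^2$-close hypersurface, an invariant $N'$ on which the restricted dynamics is $C^1$-close to the Anosov flow, and Lemma \ref{lem:nonstab} (structural stability plus the flat/skewed/non $\R$-covered trichotomy) gives the contradiction. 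Without introducing this normal hyperbolicity your argument does not close.
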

The idea of the proof is to introduce in the characteristic foliation of a hypersurface a normally hyperbolic invariant submanifold that plays the role of a robust obstruction to stability. There is a single argument in the proof of Theorem \ref{thm:main1} that requires an adaptation to work in dimension six, see Remark \ref{rem:dim6}. An analogous statement in six and four dimensions remains open to our knowledge. 
The need for at least $C^3$-approximations instead of just $C^2$ comes from a technical issue related to the persistence of normally hyperbolic invariant submanifolds; this is discussed in more detail in Remark \ref{rem:regu}.
\\ 

In the second part of this work, we establish a non-density result for non-degenerate stable Hamiltonian structures in a given manifold of dimension $2n+1$. To this end, we recall that a stable Hamiltonian structure $(\lambda,\omega)$ is called \emph{non-degenerate} if its Reeb vector field $R$, which is defined by the equations $\iota_R\omega=0$ and $\lambda(R)=1$, satisfies the following property: for every closed orbit $\gamma\colon \R\to M$ of $R$  the linearized Poincar\'e map of $R$ along $\gamma$ does not have any root of the unity among its eigenvalues.
In dimension three, it was proven by Cieliebak and Volkov in \cite[Theorem 1.10]{CV} that in every stable homotopy class there is a stable Hamiltonian structure that cannot be $C^2$-approximated by non-degenerate stable Hamiltonian structures. We establish an analogous result for stable homotopy classes satisfying a mild condition in any dimension.

\begin{theorem}
\label{thm:stably_degenerate}
    Let $M$ be a manifold of dimension $2n+1\geq 5$. In each regular stable homotopy class, there is a \shs $(\lambda',\omega')$ that cannot be $C^2$-perturbed to a non-degenerate stable Hamiltonian structure.
\end{theorem}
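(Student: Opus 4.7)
My plan is to reduce the theorem to the three-dimensional case, already settled in \cite[Theorem~1.10]{CV}, by realizing a Cieliebak--Volkov 3-dimensional example as a normally hyperbolic invariant submanifold (NHIM) of the ambient stable Hamiltonian structure. Concretely, starting from any representative of the given regular stable homotopy class, the first step is to modify it locally---in a tubular neighborhood of a well-chosen point or closed orbit---so as to obtain a new representative $(\lambda',\omega')$ in the same stable homotopy class, together with a closed embedded 3-submanifold $N\subset M$ such that (i) $N$ is invariant under the Reeb vector field $R'$, (ii) the restriction $(\lambda'|_N,\omega'|_N)$ is itself a 3-dimensional stable Hamiltonian structure realizing a Cieliebak--Volkov stably-degenerate example, and (iii) $N$ is normally hyperbolic for $R'$, meaning the linearized Reeb flow on the normal bundle splits into stable and unstable sub-bundles with spectral gap dominating that of the tangential flow. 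The ``regular'' assumption on the stable homotopy class enters precisely here, to provide the flexibility needed for this local insertion to stay inside the stable homotopy class.

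Now consider any SHS $(\lambda,\omega)$ that is $C^2$-close to $(\lambda',\omega')$. Since the Reeb vector field is determined algebraically by $\iota_R\omega=0$ and $\lambda(R)=1$, it depends $C^2$-continuously on the SHS data. By Fenichel's persistence theorem for normally hyperbolic invariant manifolds, there is a unique $R_{(\lambda,\omega)}$-invariant closed 3-submanifold $\wdt N\subset M$ that is $C^2$-close and diffeomorphic to $N$. The restricted data $(\lambda|_{\wdt N},\omega|_{\wdt N})$ is then itself a 3-dimensional SHS on $\wdt N$: closedness of $\omega|_{\wdt N}$ and the inclusion $\ker\omega|_{\wdt N}\subset\ker d(\lambda|_{\wdt N})$ are automatic, while maximal rank of $\omega|_{\wdt N}$---equivalently, the fact that $T\wdt N/\mathbb{R}R$ is a symplectic sub-bundle of $TM/\mathbb{R}R$---follows by $C^2$-proximity to the analogous property on $N$, which holds by construction. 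By continuity, this restricted 3-dimensional SHS is $C^2$-close to the original CV example $(\lambda'|_N,\omega'|_N)$.

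Applying Cieliebak--Volkov's theorem to $(\lambda'|_N,\omega'|_N)$ then yields a degenerate closed Reeb orbit $\wdt\gamma\subset\wdt N$ of the restricted flow. Since $R_{(\lambda,\omega)}|_{\wdt N}$ coincides with the Reeb field of the restricted SHS (both being uniquely characterized by the same algebraic equations), the orbit $\wdt\gamma$ is also a closed Reeb orbit of $(\lambda,\omega)$ in $M$. Its linearized Poincaré return map decomposes as the tangential return map along $\wdt N$, which equals the restricted 3-dimensional return map and has $1$ as an eigenvalue, plus the normal return map, whose spectrum lies away from the unit circle by normal hyperbolicity. Hence $\wdt\gamma$ is degenerate also as an orbit of $(\lambda,\omega)$, and no $C^2$-perturbation of $(\lambda',\omega')$ can be non-degenerate. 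I expect the main obstacle in executing this plan to lie in Step~1: one must give an explicit normal form for an SHS in a neighborhood of $N$ that makes $N$ simultaneously invariant, normally hyperbolic, and carrier of a prescribed CV 3-dimensional restriction, and then verify that such a local insertion can be performed within an arbitrary regular stable homotopy class---this is precisely where the regularity hypothesis should come into play.
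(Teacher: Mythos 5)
Your plan is genuinely different from the paper's: the paper does not reduce to dimension three and cite Cieliebak--Volkov, but instead generalizes their mechanism directly, using the higher-dimensional first integrals of \Cref{lem:first_integrals} and a family of invariant $2$-tori with varying rotation number sitting inside the level sets of $f_1 = \d\lambda\wedge\omega^{n-1}/\omega^n$ (\Cref{prop:perturbing_suspension}), together with the degeneracy criterion of \Cref{lem:degenerate_Hamilt_Reeb}. Your reduction, however, has a genuine gap at the sentence ``by continuity, this restricted $3$-dimensional SHS is $C^2$-close to the original CV example.'' First, Fenichel/HPS persistence under a $C^2$-perturbation of the Reeb field produces an invariant submanifold $\widetilde N$ that is in general only $C^1$-close to $N$ (getting $C^2$-closeness already requires $2$-normal hyperbolicity, which you would have to build into the insertion). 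Second, and more seriously, even if $\widetilde N$ is identified with $N$ by an embedding $\phi$ that is $C^k$-close to the original one, the pulled-back pair $(\phi^*\lambda,\phi^*\omega)$ is only $C^{k-1}$-close to $(\lambda'|_N,\omega'|_N)$: pulling back a differential form costs one derivative of the identification. Since the ambient perturbation is only $C^2$, the restricted stable Hamiltonian structure on $\widetilde N$ is at best $C^1$-close to your CV model, whereas \cite[Theorem 1.10]{CV} is a $C^2$-statement; its mechanism rests on $C^1$-control of the first integral $f=\d\lambda/\omega$, which is precisely the derivative you lose. So the citation of the three-dimensional theorem cannot be made as written, and the loss of a derivative is not a removable technicality of your scheme: it is intrinsic to restricting to a perturbed invariant submanifold.

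The paper sidesteps this entirely because its first integral $f_1$ lives on the fixed ambient manifold $M$: a $C^2$-perturbation of $(\lambda,\omega)$ gives directly a $C^1$-perturbation of $f_1$, hence persistence of its regular level sets, inside which the invariant $2$-tori persist as normally hyperbolic invariant submanifolds of the first-return map; a rational rotation number then produces a closed orbit in a regular level set, and \Cref{lem:degenerate_Hamilt_Reeb} (a volume-plus-symplectic determinant argument, not normal hyperbolicity of a $3$-dimensional invariant manifold) forces an eigenvalue $1$. Separately, your Step 1 --- inserting, within an arbitrary regular stable homotopy class, a closed $3$-dimensional normally hyperbolic invariant submanifold carrying a prescribed CV restriction with symplectic trivial normal bundle --- is the technical heart of your approach and is left entirely unexecuted; note that the paper needs nontrivial work of exactly this flavour even for its weaker goal (it only produces tori inside level sets, via the hyperbolic closed geodesic on $T^n$ and the Weinstein neighborhood of a Lagrangian torus), so this step should not be expected to be routine.
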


\noindent
A stable homotopy class is \emph{regular} if there is a pair $(\lambda,\omega)$ in it satisfying that $d\lambda$ is a non-zero constant multiple of $\omega$ in some open set $U\subset M$, see Definition \ref{def:goodSHS}. 
We don't know if every stable homotopy class is regular, but this is the case in dimension three by \cite[Proposition 3.31]{CV}. The most natural examples of stabilizable Hamiltonian structures lie in regular homotopy classes, such as contact type Hamiltonian structures, symplectic mapping tori, or products of contact and symplectic manifolds, see Section \ref{ss:regular}. 

We point out that Theorem \ref{thm:stably_degenerate} and its three-dimensional analog do not imply the existence of stable hypersurfaces that cannot be $C^2$-approximated by non-degenerate stable hypersurfaces, see Remark \ref{rem:approx}.
We also mention that both Theorem \ref{thm:main1} and \ref{thm:stably_degenerate} do not require the manifold $M$ (or $W$) to be closed, even though we are mostly interested in the closed case.

For the proof of \Cref{thm:stably_degenerate}, we introduce a set of ``coupling functions" that are first integrals of the Reeb field of a \shs (see Lemma \ref{lem:first_integrals}) and that generalize the classical function $f=\frac{d\lambda}{\omega}$ only defined in three dimensions. Then, we construct via a stable homotopy a \shs that admits, robustly, a periodic orbit in a regular level set of one of these first integrals. 
This periodic orbit turns out to be always degenerate (see Lemma \ref{lem:degenerate_Hamilt_Reeb}). 
\\

\textbf{Organization of this paper.} In Section \ref{s:pre}, we introduce stable Hamiltonian structures and a set of coupling functions associated with $(\lambda,\omega)$ in arbitrary dimensions. We recall as well some results in dynamical systems that will be used in our proofs. Sections \ref{s:T1} and \ref{s:T2} are independent and develop the proofs of Theorems \ref{thm:main1} and \ref{thm:stably_degenerate} respectively.\\

\textbf{Acknowledgements.} The authors are grateful to Jonathan Bowden for his suggestions to prove Theorem \ref{thm:An}, and to Viktor Ginzburg for explaining them in detail an argument in \cite[Lemma 3.4]{Gi} that is needed for the proof of \Cref{prop:embR8}.

\section{Preliminaries}\label{s:pre}

We recall in this section the basic definitions required throughout this work, as well as some notions in dynamics that will play an important role.

\subsection{Stable hypersurfaces and Hamiltonian structures}

Let $M$ be an oriented manifold of dimension $2n+1$.
\begin{defi}
    A \emph{Hamiltonian structure} $\omega$ is a closed two-form of maximal rank in $M$.
\end{defi}
In general, one can always find a one-form $\lambda$ such that $\lambda\wedge \omega^n$ induces the given orientation on $M$, which we shall denote in short by $\lambda\wedge \omega^n>0$ as usual; the pair $(\lambda,\omega)$ is known as a framed Hamiltonian structure. Stability requires the existence of a one-form with an additional property.
\begin{defi}
    A \emph{stable Hamiltonian structure} in a manifold $M$ of dimension $2n+1$ is a pair $(\lambda,\omega)$ where $\omega$ is a Hamiltonian structure and $\lambda$ is a one-form satisfying $\lambda\wedge \omega^n>0$ and $\ker \omega \subset \ker d\lambda$.
\end{defi}
We say in this case that $\omega$ is \emph{stabilizable}, and $\lambda$ is a stabilizing one-form for $\omega$. 
A stabilizable Hamiltonian structure admits several stabilizing one-forms in general, even besides constant multiples of a given $\lambda$. 
A \shs determines a vector field $R$ called the \emph{Reeb field} of $(\lambda,\omega)$ via the equations
$$\begin{cases}
    \lambda(R)=1,\\
    \iota_R\omega=0.
\end{cases}$$
In the second part of this paper, we will be interested in stable Hamiltonian structures up to stable homotopy.

\begin{defi}
A \emph{stable homotopy} is a homotopy of stable Hamiltonian structures $(\lambda_t,\omega_t)$ such that the cohomology class of $\omega_t$ remains constant.
\end{defi}
If we identify all the stable Hamiltonian structures homotopic to a given one, we obtain a \emph{stable homotopy class}. 
Notice that the stable homotopy class of a \shs $(\lambda,\omega)$ is determined by the stabilizable Hamiltonian structure $\omega$, since any two stabilizing one-forms $\lambda_1, \lambda_2$ induce homotopic stable Hamiltonian structures $(\lambda_1,\omega)$ and $(\lambda_2,\omega)$. 
We can thus unambiguously speak of the stable homotopy class of a stabilizable Hamiltonian structure. \\

The relation between stable Hamiltonian structures and symplectic manifolds can be summarized as follows. 
An embedded hypersurface $M$ in a symplectic manifold $(W,\Omega)$ inherits a Hamiltonian structure induced by the ambient symplectic form. 
When the Hamiltonian structure is stabilizable, the hypersurface is called ``stable".
\begin{defi}
    A hypersurface $M$ in a symplectic manifold $(W,\Omega)$ is \emph{stable} if the Hamiltonian structure $\omega=i^*\Omega$ is stabilizable, where $i:M\hookrightarrow W$ denotes the inclusion map of $M$ into $W$.
\end{defi}
The Reeb field of $(\lambda,\omega)$ for any stabilizing one-form of $\omega$ integrates to the characteristic foliation of $M$. As first observed in \cite{EKP}, being stable is equivalent to the existence of a neighborhood $M\times (-\varepsilon,\varepsilon)$ in $W$ for which the characteristic foliation of $M\times \{t\}$ is diffeomorphic to that of $M$ for each $t\in (-\varepsilon,\varepsilon)$.\\

For a \shs $(\lambda,\omega)$ on a three-dimensional manifold $M$, the fact that $\ker \omega \subset \ker d\lambda$ implies that $\d\lambda=f\omega$ for some function $f\in C^\infty(M)$. This function is always a first integral of the Reeb field, since $\iota_R (\d f\wedge \omega)=\d f(R)\cdot\omega$ vanishes as $\d f \wedge \omega = \d^2\lambda =0$; in particular, $\d f(R)=0$. 
For a manifold $M$ of dimension $2n+1$, for each $i=1,..,n$ the differential $2n$-form $\d\lambda^{n-i}\wedge\omega^i$ has the Reeb vector field $R$ of $(\lambda,\omega)$ in its kernel, and hence one can write $\d\lambda^{n-i}\wedge\omega^i = f_i \omega^n$ for some function $f_i\in C^\infty(M)$.
Again, taking the exterior derivative on both sides we deduce that $\iota_R(\d f_i \wedge \omega^n)=0$, i.e.\ that $\d f_i(R)=0$. 
In other words, we have just proved the following:
\begin{lemma}
\label{lem:first_integrals}
For every $1\leq i \leq n$, the functions $f_i\colon M\to \R$ such that $\d\lambda^{n-i}\wedge\omega^i = f_i \omega^n$ are (possibly trivial) first integrals of the Reeb vector field $R$ of $(\lambda,\omega)$.
\end{lemma}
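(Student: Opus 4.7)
The plan is to follow the short computation already sketched in the paragraph above the statement and make the two key points rigorous: first that the equation $\d\lambda^{n-i}\wedge\omega^i = f_i \omega^n$ defines a legitimate smooth function, and second that exterior differentiation forces $R(f_i)=0$.

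First I would check that $\d\lambda^{n-i}\wedge\omega^i$ is pointwise proportional to $\omega^n$. Since $\omega$ has maximal rank $2n$ and $\iota_R\omega=0$, the line $\ker\omega$ coincides with $\mathbb{R}R$; the stability condition $\ker\omega\subset\ker\d\lambda$ then gives $\iota_R\d\lambda=0$. By the Leibniz rule it follows that both $\omega^n$ and $\d\lambda^{n-i}\wedge\omega^i$ lie in the kernel of $\iota_R$. At each point $p\in M$, the space of $2n$-forms annihilated by $\iota_R$ coincides with $\Lambda^{2n}(\ker\lambda_p)^*$, which is one-dimensional; since $\lambda\wedge\omega^n$ is a volume form, $\omega^n$ is a nowhere vanishing generator of this line bundle. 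Consequently there is a unique smooth function $f_i\in C^\infty(M)$ such that $\d\lambda^{n-i}\wedge\omega^i = f_i\omega^n$.

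Next I would differentiate this identity. The left-hand side is closed, because $\d^2\lambda=0$ and $\d\omega=0$ give
\[
\d\bigl(\d\lambda^{n-i}\wedge\omega^i\bigr)=0.
\]
The right-hand side expands, again using $\d\omega=0$, as $\d f_i\wedge\omega^n$. Therefore $\d f_i\wedge\omega^n=0$. Contracting with $R$ and using $\iota_R\omega^n=0$ yields
\[
0=\iota_R\bigl(\d f_i\wedge\omega^n\bigr)=\d f_i(R)\,\omega^n,
\]
and since $\omega^n$ is nowhere zero we conclude $\d f_i(R)=0$, i.e. $f_i$ is a first integral of $R$.

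Neither step is a real obstacle: the only mildly delicate point is the one-dimensionality argument needed to promote the pointwise proportionality to a well-defined smooth $f_i$, and this is purely linear algebra once the two contraction identities $\iota_R\omega=0$ and $\iota_R\d\lambda=0$ are in hand. The rest is a two-line computation leveraging $\d\omega=0$ and $\d^2=0$.
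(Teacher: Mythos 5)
Your argument is correct and is essentially the paper's own proof: the paper likewise defines $f_i$ by noting that $\d\lambda^{n-i}\wedge\omega^i$ has $R$ in its kernel and is therefore a smooth multiple of the nowhere-vanishing $\omega^n$, then differentiates and contracts with $R$ to get $\d f_i(R)=0$. Your added justification that the line of $2n$-forms annihilated by $\iota_R$ is one-dimensional (via $\iota_R\omega=0$ and $\iota_R\d\lambda=0$) is a welcome but routine elaboration of the step the paper leaves implicit.
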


\medskip

\subsection{Inputs from dynamics}

We introduce here results in dynamics that will be used in our proofs. 

\medskip

\paragraph{\textbf{Normally hyperbolic invariant manifolds.}}
The first important tool that we will need is that of normally hyperbolic invariant submanifolds.

\begin{defi}
Let $X$ be a vector field on a manifold $M$ with flow $\varphi_t:M\longrightarrow M$. 
An invariant submanifold $N\subset M$ of $X$ is \emph{normally hyperbolic} if there exist constants $\rho_0>0$, $\rho_+> \rho_0$, $\rho_- >\rho_0$ and $C>0$ such that the following hold:
\begin{itemize}
    \item[-] there is a continuous splitting $TM|_N=TN\oplus E^+ \oplus E^-$, invariant under the flow and inducing a splitting $D\varphi_t|_N=D\varphi_0^t \oplus D\varphi_+^t \oplus D\varphi_-^t$;
    \item[-] $\norm{D\varphi_0^t(v)} \leq Ce^{\rho_0|t|}\norm{v}$, for all $t\in \mathbb{R}$ and $v\in TN$;
    \item[-]  $\norm{D\varphi_+^t (v)} \geq Ce^{\rho_+t}\norm{v}$, for all $t\geq 0, p\in N, v\in E^+$;
    \item[-] $\norm{D\varphi_-^t (v)} \leq Ce^{-\rho_-t}\norm{v}$, for all $t\geq 0, p\in N, v\in E^-$.
\end{itemize}
In addition, we say that $N$ is \emph{$r$-normally hyperbolic}, with $r>0$ an integer, if the second item above is replaced by $\norm{D\varphi_0^t(v)} \leq Ce^{\frac{\rho_0}{r}|t|}\norm{v}$, for all $t\in \mathbb{R}$ and $v\in TN$.
\end{defi}
Using this language, normal hyperbolicity is the same as $1$-normal hyperbolicity. 

The above notions are very standard, but let us at least mention that qualitatively what we are requiring with normal hyperbolicity is that the flow of $X$ leaves $N$ invariant and expands $E_+$, respectively contracts $E_-$, at a bigger rate than it expands, respectively contract, $TN$. 
For $r$-normal hyperbolicity, we ask that the rate of expansion and contraction in the normal directions dominates the ones in $TN$ by a factor $r$.

\medskip

The main well-known property of compact normally hyperbolic invariant submanifolds that we will use is that they are robust under perturbations of the vector field:
\begin{theorem}[\cite{Fe} and {\cite[Theorem 4.1]{HPS}}]
\label{thm:normal_hyperbolic_stable}
    Let $r\geq 1$ be an integer, and $X$ be a vector field with a compact $r$-normally hyperbolic invariant submanifold $N$. Any vector field $Y$ that is sufficiently $C^k$-close to $X$, with $k$ an integer such that $1\leq k \leq r$, admits a $r$-normally hyperbolic invariant $C^r$-submanifold $\widetilde N\cong N$, 
    that is $C^k$-close to $N$. 
\end{theorem}

\begin{Remark}\label{rem:NHSparam}
    The same notion can be defined for diffeomorphisms of a manifold, and analogous statements hold. A normally hyperbolic invariant submanifold persists under $C^1$-perturbations of the diffeomorphism. Furthermore, the invariant submanifold varies smoothly with the perturbation (see \cite[Remark 1 page 52]{HPS}). Hence if we have a smooth one-parameter family of diffeomorphisms $f_t, t\in [0,1]$ with a smooth one-parameter family of normally hyperbolic submanifolds $N_t, t\in [0,1]$, a small enough perturbation $\tilde f_t$ of this family admits a smoothly varying family of (normally hyperbolic) invariant submanifolds $\widetilde N_t$. 
\end{Remark}

\paragraph{\textbf{Rotation number of circle diffeomorphisms.}}

The material we now recall is standard; see for instance \cite{Kat} for more details.
Let $f:S^1\longrightarrow S^1$ be a diffeomorphism of the circle, and consider its lift $\tilde f: \mathbb{R} \longrightarrow \mathbb{R} $ by the covering map $\pi:\mathbb{R}\longrightarrow S^1$ given by $\pi(x)=e^{2\pi x i}$. That is $\tilde f$ satisfies $f\circ \pi= \pi\circ \tilde f$, and such a lift is unique up to adding an integer constant. Define
$$ \tau(\tilde f)=\lim_{n\rightarrow \infty} \frac{1}{n} (\tilde f^n(x)-x),$$
which exists for all $x$ and is independent of it. For any two lifts of $f$, these numbers differ only by integers. Thus, the following notion is well defined.
\begin{defi}
    The rotation number of $f$ is $\tau(f)=\pi(\tau(\tilde f))$.
\end{defi}
Of course, the rotation number of a rotation in $S^1$ is the angle of rotation. The rotation number can be understood as a point in $[0,1]$ (identifying the boundary points), it is invariant under conjugacy and satisfies the following two properties \cite[Proposition 11.1.4 and Proposition 11.1.6]{Kat} which will be of use to us.
\begin{prop}\label{prop:rotperiodic}
    Let $f:S^1\longrightarrow S^1$ be a diffeomorphism. 
    Then $\tau(f)\in \mathbb{Q}$ if and only if $f$ has a periodic orbit.
\end{prop}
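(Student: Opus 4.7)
The plan is to prove the two implications separately, with the forward direction being essentially a direct computation and the reverse direction requiring the main argument.

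For the direction ``periodic orbit implies $\tau(f) \in \mathbb{Q}$'', I would start with $x \in S^1$ such that $f^q(x) = x$ for some minimal $q \geq 1$. Choosing any lift $\tilde f$ and any preimage $\tilde x \in \pi^{-1}(x)$, the relation $f^q(x) = x$ forces $\tilde f^q(\tilde x) = \tilde x + p$ for some integer $p$, and by induction $\tilde f^{nq}(\tilde x) = \tilde x + np$. Substituting into the definition of $\tau$ along the subsequence $n \mapsto nq$ immediately gives $\tau(\tilde f) = p/q$, so $\tau(f) \in \mathbb{Q}$.

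For the converse, I would argue by contrapositive: assuming $f$ has no periodic orbit, I want to show $\tau(f) \notin \mathbb{Q}$. Suppose for contradiction that $\tau(f) = p/q$ with $\gcd(p,q)=1$. Choosing the unique lift $\tilde f$ with $\tau(\tilde f) = p/q$ (other lifts differ by integers and shift $\tau$ by the same integer), I would study the displacement function
\[
h(x) := \tilde f^q(x) - x,
\]
which is continuous and satisfies $h(x+1) = h(x)$, so descends to a continuous function on $S^1$. The non-existence of a periodic orbit of period dividing $q$ means $h$ never takes integer values; by continuity and compactness, there exists an integer $k$ and constants $m,M$ with $k < m \leq h(x) \leq M < k+1$ for all $x$. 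Telescoping via $\tilde f^{nq}(x) - x = \sum_{i=0}^{n-1} h(\tilde f^{iq}(x))$ gives $nm \leq \tilde f^{nq}(x) - x \leq nM$, and dividing by $nq$ and passing to the limit yields $m/q \leq \tau(\tilde f) \leq M/q$, i.e.\ $m \leq p \leq M$. Combined with $k < m$ and $M < k+1$, this forces the integer $p$ to satisfy $k < p < k+1$, a contradiction.

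The substantive step is the existence and sharpness of the bounds $m, M$ strictly between consecutive integers: this relies on the intermediate value theorem applied to the $\mathbb{Z}$-periodic continuous function $h$ on a compact domain, together with the hypothesis that $h$ avoids $\mathbb{Z}$. Apart from bookkeeping with lifts (choosing the correct representative so that the numerator $p$ appears in the asymptotic comparison), I do not anticipate any serious obstacle; the argument is essentially a compactness-plus-telescoping computation.
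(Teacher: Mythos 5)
Your proof is correct. Note that the paper does not prove this statement at all --- it simply cites it as \cite[Proposition 11.1.4]{Kat} --- so there is no in-paper argument to compare against; your write-up is essentially the standard textbook proof. Both directions are sound: the forward direction correctly extracts $\tau(\tilde f)=p/q$ from the subsequence $n\mapsto nq$ (using that the full limit is known to exist), and the converse correctly combines the $\Z$-periodicity and continuity of $h(x)=\tilde f^q(x)-x$, the fact that ``no periodic orbit'' forces $h$ to avoid $\Z$ and hence to be trapped in some interval $(k,k+1)$ by compactness and the intermediate value theorem, and the telescoping estimate to pin the integer $p$ strictly between $k$ and $k+1$. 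The only implicit hypothesis worth flagging is orientation-preservation (needed for $\tilde f(x+1)=\tilde f(x)+1$ and hence for $h$ to descend to $S^1$), which is the standard setting in which the rotation number is defined and is what the paper tacitly assumes as well.
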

\begin{prop}\label{prop:rotcont}
    The map $\tau: \operatorname{Diff}(S^1)\longrightarrow S^1$ is continuous when we equip $\operatorname{Diff}(S^1)$ with the $C^0$-topology.
\end{prop}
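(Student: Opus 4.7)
The plan is to use the Poincaré characterization of the rotation number in terms of finite-iterate inequalities, which are open conditions in the $C^0$ topology.

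First, I would pass to lifts. Given $f,g \in \operatorname{Diff}(S^1)$ sufficiently $C^0$-close, one chooses lifts $\tilde f, \tilde g \colon \R \to \R$ (which are automatically monotone and satisfy $\tilde h(x+1) = \tilde h(x) + 1$) with $\|\tilde f - \tilde g\|_\infty \le \epsilon$ on all of $\R$: the difference $\tilde f - \tilde g$ is continuous and $1$-periodic, hence constant modulo $\Z$, so subtracting the correct integer from the lift of $g$ makes the sup norm of the difference arbitrarily small. Since the projection $\R \to S^1$ is continuous, it then suffices to prove continuity of the real-valued rotation number on the space of such lifts.

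Second, I would record the classical Poincaré characterization: for any such $\tilde h$ and any rational $p/q$,
\[
\tau(\tilde h) > p/q \;\Longleftrightarrow\; \min_{x\in\R}\bigl(\tilde h^q(x) - x\bigr) > p,
\]
and symmetrically $\tau(\tilde h) < p/q$ iff $\max_x(\tilde h^q(x) - x) < p$. The key direction is the ``if'': if $\min_x(\tilde h^q(x) - x) > p$, then by $1$-periodicity of $\tilde h^q(x) - x$ and compactness there is $\delta > 0$ with $\tilde h^q(x) \geq x + p + \delta$ for all $x$; iterating and using monotonicity yields $\tilde h^{nq}(x) \geq x + n(p + \delta)$, whence $\tau(\tilde h) \geq p/q + \delta/q > p/q$.

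Third, I would combine this with the $C^0$-continuity of the $q$-fold iteration map $\tilde h \mapsto \tilde h^q$ for each fixed $q$ — a routine induction using uniform continuity of $\tilde h$ on a fundamental domain, which propagates to all of $\R$ by $T_1$-equivariance. Given $\delta > 0$, pick rationals $p/q$ and $p'/q'$ with $\tau(\tilde f) - \delta < p/q < \tau(\tilde f) < p'/q' < \tau(\tilde f) + \delta$. The Poincaré characterization gives $\min_x(\tilde f^q(x) - x) > p$ and $\max_x(\tilde f^{q'}(x) - x) < p'$; for $\tilde g$ sufficiently $C^0$-close to $\tilde f$, the same strict inequalities hold for $\tilde g^q$ and $\tilde g^{q'}$, so $p/q < \tau(\tilde g) < p'/q'$, i.e.\ $|\tau(\tilde f) - \tau(\tilde g)| < 2\delta$.

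The main obstacle is the Poincaré characterization itself: its ``only if'' direction requires a careful subadditivity/Fekete-type argument to translate the absence of a pointwise bound on $\tilde h^q$ into a rotation-number inequality. Once this is in hand, the $C^0$-continuity of iteration for fixed $q$ is routine, and the sandwiching argument above delivers continuity of $\tau$ essentially for free.
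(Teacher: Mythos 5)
Your argument is correct and is essentially the standard proof of this statement, which the paper does not prove itself but simply quotes from Katok--Hasselblatt (Proposition 11.1.6 there): pass to lifts, use that for rationals $p/q$ the conditions $\min_{x}(\tilde h^q(x)-x)>p$ and $\max_{x}(\tilde h^{q'}(x)-x)<p'$ characterize $\tau(\tilde h)\in(p/q,\,p'/q')$ and are $C^0$-open for fixed $q,q'$, then sandwich. One phrasing slip to fix in your first step: the difference $\tilde f-\tilde g$ of lifts of two \emph{distinct} nearby circle maps is $1$-periodic but not ``constant modulo $\Z$''; the correct observation is that if $d_{C^0}(f,g)<\varepsilon<1/2$ then the $S^1$-valued function $x\mapsto \tilde f(x)-\tilde g(x) \bmod \Z$ takes values in an $\varepsilon$-ball, which is evenly covered, so by connectedness a single integer shift of the lift of $g$ achieves $\sup_{x\in\R}|\tilde f(x)-\tilde g(x)|<\varepsilon$ --- the conclusion you need, and the rest of your argument then goes through.
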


\medskip

\section{Non-density of stable hypersurfaces}\label{s:T1}

In this section, we establish the non-density of stable hypersurfaces in dimensions greater than eight, as stated in Theorem \ref{thm:main1}. The first step is to construct in Theorem \ref{thm:An} a Hamiltonian structure on a 3-dimensional homology sphere that is robustly non-stabilizable. Thanks to the specific properties of this Hamiltonian structure, we show that it can be embedded as an invariant submanifold of the characteristic foliation of any hypersurface of dimension at least eight (Proposition \ref{prop:firstemb}) with a suitable normal form that allows making it a normally hyperbolic invariant submanifold (Proposition \ref{prop:addhyper}). This invariant submanifold acts as a $C^2$-robust obstruction to the stability of the hypersurface.

\subsection{A robustly non-stable Anosov flow on a 3D homology sphere}\label{ss:An}

In this subsection, we will prove the existence of a suitable three-dimensional non-vanishing and volume-preserving vector field that spans the kernel of a Hamiltonian structure that is robustly non-stabilizable.

\begin{defi}
    A vector field $X$ on a three-dimensional manifold $N$ is Anosov if there exists constants $C, \rho>0$ and a continuous splitting $TN=\langle X\rangle \oplus E_u \oplus E_s$, with $E_u, E_s$ of constant rank $1$, such that:
\begin{itemize}
    \item[-] $\norm{(\phi_t)_*v} \leq Ce^{-\rho t}\norm{v}$, for all $t\geq 0$ and $v\in E_s$;
    \item[-] $\norm{(\phi_t)_*v} \geq Ce^{\rho t}\norm{v}$, for all $t\geq 0$ and $v\in E_u$.
\end{itemize}
\end{defi}
The splitting is in general only of $C^0$-regularity. Such flows are $C^1$-structurally stable: any flow that is sufficiently $C^1$-close to $X$ is orbit equivalent to $X$. As shown by Anosov \cite{An}, the distributions $E_s, E_u, E_s\oplus \langle X \rangle, E_u \oplus \langle X\rangle$ are all integrable. They define $C^0$-foliations denoted by $\mathcal{F}_{ss}, \mathcal{F}_{uu}, \mathcal{F}_s, \mathcal{F}_u$ and called strong stable, strong unstable, stable and unstable foliations respectively.

In three dimensions, Anosov flows on closed three-manifolds fall into three disjoint categories \cite{F, B1}: flat, skewed, and non $\mathbb{R}$-covered. We don't give here the precise definitions of these terms, but just limit ourselves to mentioning that each of these categories is determined by the orbit equivalence class of the Anosov flow.
More precisely, we will need two facts in the proof of the lemma below: an Anosov flow orbit equivalent to a suspension is flat (see e.g \cite{F}), and an Anosov flow in the kernel of a contact type $2$-form $d\alpha$ (i.e. $\alpha$ satisfies $\alpha\wedge d\alpha\neq 0$ at every point) is skewed \cite{B2}.

\begin{lemma}\label{lem:nonstab}
    Let $X$ be a smooth Anosov flow preserving a smooth volume form $\mu$ on a closed three-manifold. If $X$ is non $\mathbb{R}$-covered, then $\iota_X\mu$ admits no stabilizing one-form. 
    Moreover, any Hamiltonian structure $\omega$ that is $C^1$-close to $\iota_X\mu$ is not stabilizable.
\end{lemma}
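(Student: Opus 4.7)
The plan is to assume for contradiction that $\omega = \iota_X\mu$ admits a stabilizing one-form $\lambda$ and use the non $\mathbb{R}$-covered hypothesis to reach a contradiction. Since $\dim M = 3$, both $\omega$ and $d\lambda$ are two-forms with one-dimensional kernel $\langle X\rangle$, so $d\lambda = f\omega$ for some smooth function $f$---this is the three-dimensional instance of Lemma~\ref{lem:first_integrals}. Differentiating yields $df\wedge\omega = 0$, equivalently $df(X) = 0$, so $f$ is a smooth first integral of $X$. Because $X$ is a volume-preserving Anosov flow on a closed three-manifold, Anosov's ergodicity theorem forces any smooth first integral to be constant, so $f \equiv c$ for some $c \in \mathbb{R}$.

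Now I split into two cases on $c$. If $c \neq 0$, then $\lambda \wedge d\lambda = c\,\lambda\wedge\omega$ is a volume form, so $\lambda$ is a contact form and $X$ spans $\ker d\lambda$; by the theorem of Barbot recalled just above the lemma (see \cite{B2}), $X$ must then be skewed, and in particular $\mathbb{R}$-covered, contradicting the hypothesis. If $c = 0$, then $\lambda$ is closed and, since $\lambda\wedge\omega > 0$ forces $\lambda(X) > 0$, nowhere vanishing. Density of rational cohomology classes in $H^1(M;\mathbb{R})$ (via harmonic representatives) lets me perturb $\lambda$ within closed forms to a closed $\lambda'$ of rational cohomology class, keeping $\lambda'(X) > 0$ by $C^0$-closeness on the compact $M$. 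After rescaling by an integer so that $[\lambda'] \in H^1(M;\mathbb{Z})$, integration of $\lambda'$ from a basepoint defines a smooth submersion $\pi\colon M \to S^1$ (a locally trivial fibration by Ehresmann) whose fibers are everywhere transverse to $X$. Hence $X$ is a suspension Anosov flow, and therefore flat in the classification of \cite{F}---again $\mathbb{R}$-covered, which is the second contradiction.

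For the robustness statement, let $Y$ preserve $\mu$ and be $C^1$-close to $X$. Structural stability of Anosov flows gives that $Y$ is itself Anosov and orbit equivalent to $X$; since the flat/skewed/non $\mathbb{R}$-covered trichotomy depends only on the orbit foliation, $Y$ is again non $\mathbb{R}$-covered, and by hypothesis $Y$ still preserves $\mu$, so applying the first part to $Y$ shows that $\iota_Y\mu$ is not stabilizable. The main obstacle I expect is the $c = 0$ branch: extracting a global cross-section for $X$ from a purely closed one-form requires the Tischler approximation by rational classes, and the contradiction then rests on the classical fact that suspension Anosov flows in dimension three belong to the flat, hence $\mathbb{R}$-covered, component of the trichotomy.
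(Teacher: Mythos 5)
Your proof is correct and follows essentially the same route as the paper: reduce to $d\lambda = f\omega$ with $f$ a constant first integral, rule out $f\neq 0$ via Barbot's theorem (skewed) and $f=0$ via Tischler's cross-section argument (suspension, hence flat), then invoke structural stability for the robustness claim. The only difference is that you spell out the Tischler approximation and the ergodicity justification in more detail than the paper, which simply cites these as well-known.
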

\begin{proof}
    We start with the first part of the statement: assuming that $\lambda$ is a stabilizing one-form for $\omega=\iota_X\mu$, we will prove that $X$ is either flat or skewed.
    Thanks to the existence of the stabilizing one-form, we know that $d\lambda=f\omega$ for some function $f\in C^\infty(M)$ which is necessarily a first integral of $X$. 
    However, it is well-known that an Anosov flow admits no non-trivial first integral, and thus $f$ is necessarily constant. Now, if $f\neq 0$ then $\lambda$ is a contact form; as the vector field $X$ is in the kernel of the contact type two-form $d\lambda$, it must be skewed. 
    If on the contrary $f\equiv 0$, it follows via an argument of Tischler \cite{Ti} that the flow admits a global cross-section and thus that it is orbit equivalent to a suspension. 
    This implies that $X$ is flat, thus concluding the proof of the first part of the lemma.
    
    As far as the second part is concerned, notice that, by the structural stability of $X$, the vector field $Y$ defined by $\iota_Y\mu=\omega$ will be a non $\mathbb{R}$-covered Anosov flow as well. This, the two-form $\omega$ admits no stabilizing one-form by the first part of the lemma.
\end{proof}

\begin{Remark}\label{rem:reguAnosov}
     In Lemma \ref{lem:nonstab}, the two-form $\omega$ can be allowed to have $C^1$-regularity, and the stabilizing one-forms to be of $C^2$-regularity.
     Indeed, in this case, the integral $f$ is of $C^1$-regularity, and the argument goes through.
\end{Remark}

\medskip

For our construction, we need to first recall some facts about surgery operations on Anosov flows. 

The \emph{Dehn-Goodman-Fried surgery} of an Anosov flow $X$ in a manifold $M$ is a Dehn surgery with certain coefficients along a periodic orbit (or a collection of periodic orbits) of $X$ that transforms $(M,X)$ into another pair $(M',X')$ where $X'$ is again an Anosov flow. This operation was initially defined in two different ways by Fried \cite{Fr} and Goodman \cite{Go}, and then it was proven by Shannon \cite{Sh} that these two definitions coincide. We list the properties about this surgery operation that we will need:
\begin{enumerate}
    \item if $X$ is transitive, then so is $X'$ \cite[Chapter 3]{Sh}.
    \item Given any transitive Anosov vector field, it is shown in \cite{As} that we can find a vector field that is orbit equivalent to it which is smooth and that preserves a smooth volume form. This applies in particular to any $X'$ obtained from a surgery of a transitive Anosov flow, which will hence be assumed to be smooth and volume-preserving.
    \item Assuming that $M$ is oriented, if $X$ has coorientable stable and unstable foliations, then so does $X'$ (this just follows from the description of the surgery, see e.g. \cite{Sh}).
\end{enumerate}

We proceed to construct a non $\mathbb{R}$-covered Anosov flow with coorientable stable and unstable foliations on an integral homology sphere. (These properties will be needed for later use in Proposition \ref{prop:embR8}.) The strategy for this construction has been suggested to us by Jonathan Bowden.

\begin{theorem}\label{thm:An}
There exists a three-dimensional integral homology sphere $N$ and a smooth volume-preserving Anosov flow $X\in \mathfrak{X}(N)$ with coorientable stable and unstable foliations that is non $\mathbb{R}$-covered.
\end{theorem}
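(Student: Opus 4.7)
The plan is to obtain $N$ and $X$ by Dehn-Goodman-Fried surgery, exploiting the three properties (1)--(3) listed just above the statement of the theorem.

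\textbf{Starting point.} I would start with the geodesic flow $X_0$ on $M_0 = T^1 \Sigma$, the unit tangent bundle of a closed orientable hyperbolic surface $\Sigma$ of genus $g\geq 2$. This is a smooth, transitive Anosov flow preserving the Liouville volume, and its weak stable and unstable foliations are coorientable because $\Sigma$ is orientable. However, $X_0$ is the Reeb flow of the Liouville contact form, hence skewed $\mathbb{R}$-covered, and $M_0$ has non-trivial first homology. The remaining task is to modify $(M_0,X_0)$ to simultaneously kill the homology and destroy the $\mathbb{R}$-covered property, while preserving transitivity and coorientability.

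\textbf{Surgery step.} The plan is to perform a finite sequence of Dehn-Goodman-Fried surgeries along carefully selected closed orbits $\gamma_1,\dots,\gamma_k$ of $X_0$ with integer coefficients $n_1,\dots,n_k$, yielding a new manifold $N$ and Anosov flow $X'$. By property~(1) on page~\pageref{ss:An}, the surgered flow $X'$ is automatically transitive, and by property~(3) its stable and unstable foliations remain coorientable since $M_0$ and $N$ are both oriented and the operation is local around the $\gamma_i$. The non-trivial content is to arrange the pair $((\gamma_i),(n_i))$ so that
\begin{itemize}
    \item the effect on homology, computed via Mayer--Vietoris, gives $H_1(N;\mathbb{Z})=0$ (including torsion), so that $N$ is an integral homology sphere; and
    \item $X'$ is non $\mathbb{R}$-covered.
\end{itemize}
For the first point, I would use that the closed orbits of $X_0$ are dense and that their homology classes generate $H_1(M_0;\mathbb{Z})$ (by transitivity and the Anosov closing lemma), so that suitable tuples of $\gamma_i$ and coefficients $n_i$ realize any prescribed modification of $H_1$; killing $H_1(M_0;\mathbb{Z}) = \mathbb{Z}^{2g}\oplus \mathbb{Z}/(2g-2)$ amounts to solving an explicit system of linear congruences in the $n_i$. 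For the second point, I would rely on the Barbot--Fenley classification of $\mathbb{R}$-covered Anosov flows: a surgery on a skewed flow that is not ``contact'' in the sense of Foulon--Hasselblatt produces a flow with non-trivial lozenges in the bifoliated orbit space (or, equivalently, with transverse incompressible tori), and hence fails to be $\mathbb{R}$-covered. Generic integer coefficients $n_i$ satisfy this.

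\textbf{Conclusion.} Once $X'$ is built, property~(2) lets me replace it by an orbit-equivalent flow $X$ on $N$ that is smooth and preserves a smooth volume form; coorientability of $\mathcal{F}_s,\mathcal{F}_u$ and the non $\mathbb{R}$-covered property are invariants of orbit equivalence, so they are retained by $X$. The main obstacle is the last two points of the surgery step: ensuring that the linear constraints needed to reach a homology sphere are compatible with coefficients that genuinely break the $\mathbb{R}$-covered property. Handling this compatibility in a robust way is where Bowden's suggestion enters, and I would expect the bulk of the actual proof to be a careful accounting of the surgery coefficients and an application of the Barbot--Fenley dichotomy to certify that the resulting flow lies in the non $\mathbb{R}$-covered class rather than falling back to flat or skewed.
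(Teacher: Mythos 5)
Your overall strategy (start from a geodesic flow and use Dehn--Goodman--Fried surgery to simultaneously fix the homology and break the $\mathbb{R}$-covered property) is in the same spirit as the paper's, but both of the steps you flag as ``the non-trivial content'' have genuine gaps, and the paper resolves them by quite different means.

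First, the homology step. Dehn--Goodman--Fried surgery does not allow arbitrary surgery coefficients: to keep the flow Anosov one is restricted to integer (``$1/n$''-type) surgeries with respect to the framing induced by the invariant foliations along the chosen periodic orbits. It is therefore far from clear --- and you give no argument --- that one can kill $H_1(T^1\Sigma;\mathbb{Z})=\mathbb{Z}^{2g}\oplus\mathbb{Z}/(2g-2)$ with such constrained surgeries; ``solving an explicit system of linear congruences in the $n_i$'' does not address the free part at all, since with these slopes the meridians one kills are constrained by the orbit classes and framings. The paper sidesteps this entirely: it starts from the result of \cite{De} giving an Anosov flow $Y_1$ (a hyperbolic orbifold geodesic flow) that already lives on an integral homology sphere $N_1$ with coorientable strong foliations, and then uses the almost-equivalence theorem of \cite{DS} together with Shannon's characterization to realize $(N_1,Y_1)$ as the result of Dehn--Goodman--Fried surgeries on the cat-map suspension $(N_2,Y_2)$; the only extra surgeries it must control homologically are two $\pm1$ surgeries, handled via \cite{Sa}.

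Second, the non-$\mathbb{R}$-covered step. The claim that ``generic integer coefficients'' destroy the $\mathbb{R}$-covered property is unsupported and, as stated, contradicts known phenomena: Fenley's classical examples show that surgeries on geodesic flows can produce skewed $\mathbb{R}$-covered Anosov flows, and surgeries of a fixed sign on suitable orbits are known to preserve skewedness. What the paper actually uses is the precise result \cite[Theorem 6]{BI}: there exist two periodic orbits $\gamma_+,\gamma_-$ (chosen disjoint from the surgery locus $\mathcal{C}$) such that performing a \emph{positive} surgery on $\gamma_+$ and a \emph{negative} surgery on $\gamma_-$, together with arbitrary surgeries on $\mathcal{C}$, always yields a non-$\mathbb{R}$-covered flow. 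Without an input of this kind, your genericity assertion is the missing idea, not a routine verification. The concluding step of your proposal (using property (2) to pass to a smooth volume-preserving representative, and the invariance of coorientability and of the $\mathbb{R}$-covered trichotomy under orbit equivalence) matches the paper and is fine.
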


\begin{proof}
By \cite[Theorem A and Remark 2.2]{De} there exists an Anosov flow $Y_1$ in some integral homology sphere $N_1$ whose strong stable foliation $\mathcal{F}_{ss}$ and strong unstable foliation $\mathcal{F}_{uu}$ are coorientable. 
The flow corresponds to the geodesic flow of a hyperbolic orbifold. 
Let now $Y_2$ be the flow obtained by the suspension of the cat map on $T^2$.
This yields an Anosov flow in a torus bundle $N_2$ over the circle. 

By \cite[Theorem A]{DS}, the flows $(N_1,Y_1)$ and $(N_2,Y_2)$ are \emph{almost equivalent}, i.e. there exist finite collections of periodic orbits $C_1$ and $C_2$ of the flows $Y_1$ and $Y_2$ respectively, such that 
$$N_1\setminus C_1 \cong N_2\setminus C_2,$$
and $Y_1, Y_2$ are orbit equivalent away from these periodic orbits. 
As shown in \cite{Sh}, almost equivalence can also be characterized by the fact that $(N_1, Y_1)$ can be obtained from $(N_2,Y_2)$ via finitely many Dehn--Goodman--Fried surgeries. 
Let then $\mathcal{C} \subset N_2$ be the union of the finitely many periodic orbits where these surgeries are performed. 

It is moreover shown in \cite[Theorem 6]{BI} that there exist two periodic orbits $\gamma_+,\gamma_-$ of $Y_2$ in $N_2$, that can be chosen in the complement of $\mathcal{C}$, such that doing a negative surgery on $\gamma_-$, a positive surgery on $\gamma_+$ and any surgery on the orbits of $\mathcal{C}$ always yields a non $\mathbb{R}$-covered Anosov flow.
Thus, if we do the Dehn--Goodman--Fried surgeries along $\mathcal{C}$ that bring $(N_2,Y_2)$ to $(N_1,Y_1)$, and any positive and negative surgery on $\gamma_+$ and $ \gamma_-$, we will always obtain a non $\mathbb{R}$-covered Anosov flow $X$ on some three-manifold $N$. 
By the three properties of this surgery recalled above, the flow $X$ has coorientable stable and unstable foliations, and can be assumed to be smooth and to preserve a smooth volume form. 
Choosing as coefficients $+1$ and $-1$ for the surgeries in $\gamma_+$ and $\gamma_-$ respectively, the integral homology of the manifold $N$ can moreover be arranged to be the same as that of $N_1$, see e.g. \cite[Section 1.1.5]{Sa}. Hence, we obtain a smooth volume-preserving and non $\mathbb{R}$-covered Anosov flow $X$ with coorientable stable and unstable foliations on an integral homology sphere $N$.
\end{proof}
In fact, we will only need that the ambient manifold is a rational (rather than integral) homology sphere. 
Indeed, for our purposes this already guarantees that any Hamiltonian structure having $X$ in its kernel, hence in particular $\iota_X\mu$ where $\mu$ is the smooth volume form preserved by $X$, is exact.

\medskip

\subsection{An invariant submanifold with Anosov dynamics}

We now show that given a hypersurface $M$ on a symplectic manifold of dimension at least eight, there is a $C^0$-perturbation of $M$ whose characteristic foliation contains an invariant three-dimensional copy of $N$, with a trivial symplectic normal bundle, where the Reeb dynamics is exactly given by the Anosov flow constructed in Theorem \ref{thm:An}.

\smallskip

Consider the Anosov flow $X$ on a rational homology sphere $N$ constructed in Theorem \ref{thm:An}, and let $\mu$ be the smooth volume form preserved by $X$. Denote the two-form $\iota_X\mu$ by $\widetilde \omega$ (which is exact by homological reasons), and choose a one-form $\alpha$ such that $\alpha(X)=1$. Endow $V=N\times (-\varepsilon,\varepsilon)$ with the form $\omega_V= \widetilde \omega + d(t\alpha)$, where $t$ is the coordinate in $(-\varepsilon,\varepsilon)$.
It is non-degenerate for $\varepsilon>0$ small enough. 
We will show that $V$ embeds symplectically in a $\mathbb{R}^7$-slice of the standard symplectic $\mathbb{R}^8$.
Embedding certain symplectic manifolds in a hyperplane of the standard symplectic space is a strategy already used in \cite{Gi} as a method to embed dynamics in hypersurfaces. 
The difficulty lies in the fact that there are in general some formal obstructions to construct such an embedding. 
The construction of such an embedding in our case will then of course use the properties of the Anosov flow constructed in \Cref{thm:An}. 

Using the $h$-principle language from e.g.\ \cite{hprinc}, we recall that the formal counterpart of an isosymplectic immersion of a symplectic manifold $(V,\omega_V)$ into another symplectic manifold $(W,\omega_W)$ is an \emph{isosymplectic monomorphism}, i.e. a monomorphism
$$F: TV \longrightarrow TW,$$
satisfying $\omega_V(u,v)=\omega_W(F(u),F(v))$ for each $u,v\in TV$, and such that the base map $f:= \operatorname{bs}F: V\rightarrow W$ satisfies $f^*[\omega_W]=[\omega_V]$.

\begin{prop}\label{prop:embR8}
    For some small enough $\delta>0$, the symplectic manifold $(V=N\times (-\delta,\delta),\omega_V= \widetilde{\omega} + \d(t\alpha))$ admits an isosymplectic embedding into $(\mathbb{R}^8, \omega_{std})$ and a tubular neighborhood $U\cong V \times (-\delta,\delta)^4$ such that 
    $$\omega_{std}|_U= \omega_V + \sum_{i=1}^2 du_i\wedge dv_i, $$
    where $(u_1,v_1,u_2,v_2)$ are coordinates in $(-\delta,\delta)^4$. 
    We can further assume that $v_2=y_4|_{U}$, where $(x_i,y_i)_{i=1,\ldots,4}$ are the standard symplectic coordinates in $\mathbb{R}^8$.
\end{prop}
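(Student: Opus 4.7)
The plan is to apply the $h$-principle for isosymplectic embeddings to first realize $(V,\omega_V)$ as a symplectic submanifold of the symplectic $6$-plane $(\mathbb{R}^6, \omega_{std}|_{\mathbb{R}^6}) \subset (\mathbb{R}^8,\omega_{std})$ defined by $\{x_4 = y_4 = 0\}$, and then to thicken it in the two leftover symplectic directions $\partial_{x_4}, \partial_{y_4}$ via Weinstein's neighborhood theorem. Those directions will furnish exactly the $(u_2, v_2) = (x_4, y_4)$ factor, yielding the required identification $v_2 = y_4|_U$.

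First, I would produce an isosymplectic embedding $i\colon V \hookrightarrow (\mathbb{R}^6, \omega_{std}|_{\mathbb{R}^6})$. Since $V = N \times (-\delta,\delta)$ is open and the codimension is $2$, Gromov's $h$-principle for isosymplectic embeddings (see \cite{hprinc}) reduces the task to the construction of formal data: a smooth map $f\colon V \to \mathbb{R}^6$ satisfying $f^*[\omega_{std}] = [\omega_V]$ in real cohomology, together with a fiberwise injective bundle morphism $F\colon TV \to T\mathbb{R}^6$ over $f$ with $F^*\omega_{std} = \omega_V$ pointwise. The cohomological condition is automatic, because $H^2(V;\mathbb{R}) \cong H^2(N;\mathbb{R}) = 0$ as $N$ is a rational homology sphere; equivalently, $\widetilde\omega$ is exact by hypothesis and so $\omega_V = \widetilde\omega + d(t\alpha)$ is exact. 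The monomorphism $F$ is then straightforward: $TV = TN \oplus \mathbb{R}\langle \partial_t\rangle$ is a trivial rank-$4$ bundle (every orientable $3$-manifold is parallelizable, and in fact this is reinforced here by the Anosov splitting of \Cref{thm:An}), while pointwise one only needs to embed a symplectic $4$-plane inside the standard symplectic $6$-plane, which is unobstructed.

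Next, I would invoke Weinstein's symplectic neighborhood theorem at the embedded submanifold $i(V) \subset \mathbb{R}^6$. Its symplectic normal bundle is a rank-$2$ symplectic vector bundle over $V$, classified by its first Chern class lying in $H^2(V;\mathbb{Z}) \cong H^2(N;\mathbb{Z})$. This group vanishes because $N$ was arranged in \Cref{thm:An} to be an integral homology sphere, so the symplectic normal bundle is trivial. Weinstein's theorem then produces a tubular neighborhood of $i(V)$ in $\mathbb{R}^6$ symplectomorphic to $V \times (-\delta,\delta)^2$ equipped with the split form $\omega_V + du_1 \wedge dv_1$, for suitably chosen coordinates $(u_1, v_1)$.

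Finally, I would view $\mathbb{R}^6$ inside $\mathbb{R}^8$ via $\{x_4 = y_4 = 0\}$ and take the product of the above neighborhood with a small bidisc in the symplectic $2$-plane spanned by $\partial_{x_4}, \partial_{y_4}$. This yields a tubular neighborhood $U \cong V \times (-\delta,\delta)^4$ of $i(V)$ in $\mathbb{R}^8$ on which $\omega_{std}$ reads $\omega_V + du_1 \wedge dv_1 + dx_4 \wedge dy_4$; the choice $u_2 = x_4|_U$, $v_2 = y_4|_U$ then matches the proposition. I expect the main obstacle to be the first step, i.e.\ verifying that the formal obstructions to the codimension-$2$ isosymplectic $h$-principle genuinely vanish in our setting; this is precisely what the openness of $V$ and the (integral) homology-sphere hypothesis on $N$ deliver.
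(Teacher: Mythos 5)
There is a genuine gap in your first step. The $h$-principle you invoke for isosymplectic \emph{embeddings} of open manifolds (Gromov; see \cite[Theorem 12.1.1]{hprinc}) requires codimension at least $4$, whereas you are trying to embed the $4$-dimensional $(V,\omega_V)$ into $(\mathbb{R}^6,\omega_{std,6})$, which is codimension $2$. In that range only the $h$-principle for isosymplectic \emph{immersions} is available, and this is not a technicality: a generic immersion of a $4$-manifold into a $6$-manifold has a $2$-dimensional self-intersection locus, so one cannot expect to upgrade the immersion to an embedding of $V$ into $\mathbb{R}^6$ by general position. Since everything downstream (the Weinstein neighborhood, the identification $v_2=y_4$) hinges on this embedding, the argument as written does not go through. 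Note also that you cannot dodge the issue by applying the codimension-$4$ embedding $h$-principle directly into $\mathbb{R}^8$: the whole point of the statement is that the image must lie in the hyperplane $\{y_4=0\}$, so that it can later be placed inside a hypersurface of $\mathbb{R}^8$; this forces the low-codimension problem.

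The paper's proof confronts exactly this difficulty. It first produces an isosymplectic \emph{immersion} $\tilde f\colon (V,\omega_V)\looparrowright(\mathbb{R}^6,\omega_{std,6})$ via the immersion $h$-principle (here your formal data discussion is essentially correct, and the symplectic triviality of $TV$ — coming from the coorientability of the Anosov foliations — also yields the symplectic triviality of the normal bundle $\nu$ of $\tilde f$, without needing $H^2(N;\mathbb{Z})=0$ for that step). It then perturbs $\tilde f$, following an argument of Ginzburg, so that the core $N\times\{0\}$ (a $3$-manifold, whose generic self-intersections in $\mathbb{R}^6$ are isolated transverse double points) has only double points, and finally resolves these double points by pushing one branch of $V$ into the extra coordinate of $\mathbb{R}^7=\{y_4=0\}\subset\mathbb{R}^8$. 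This yields an embedding $e\colon V\hookrightarrow\mathbb{R}^7$ with $e^*(\pi^*\omega_{std,6})=\omega_V$ and trivial symplectic normal bundle $\nu\oplus\langle\partial_{x_4},\partial_{y_4}\rangle$, after which the normal form and the identification $v_2=y_4$ follow as in your last two steps. The remark in the paper that the analogous statement in ambient dimension six is open is precisely a reflection of the fact that such low-codimension isosymplectic embeddings are not delivered by any $h$-principle.
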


\begin{proof}
Let $X$ be the Anosov flow on an integral homology sphere $N$ constructed in Theorem \ref{thm:An}, which lies in the kernel of the exact Hamiltonian structure $\widetilde \omega$. Denote by $\omega_{std,6}$ the standard symplectic form in $\mathbb{R}^6$, and the same notation will be used for $\pi^*\omega_{std,6}$ in $\mathbb{R}^7$, where $\pi$ is the projection to the hyperplane $\mathbb{R}^6 \times \{0\}$. Our first goal will be to construct an embedding $e$ of $V'=N\times (-\delta,\delta)$ into $\mathbb{R}^7$, for a small enough $\delta>0$, such that $e^*\omega_{std,6}=\omega_V|_{V'}$. Such an embedding is what Ginzburg \cite{Gi} calls a ``symplectic" embedding of $(V',\omega_V|_{V'})$ into $(\mathbb{R}^7, \omega_{std,6})$.

By construction, $X$ has cooriented strong stable and strong unstable foliations $\cF_{ss}$ and $\cF_{uu}$. 
This implies that any plane field on $N$ in the complement of $\langle X \rangle$ can be trivialized. Choose then a non-vanishing vector field $Y$ in $\eta=\ker\alpha$.
Let also $J$ be an almost complex structure on $V$ with $J\eta=\eta$ and that is tamed by $\omega_V$.
Then, we have a complex trivialization $TV = \langle Y,JY,X,JX\rangle$.
(Here, we naturally identified $Y$ and $X$ in $N$ with $(Y,0)$ and $(X,0)$ in $N\times (-\varepsilon,\varepsilon)$.)
In terms of the symplectic structure on $TV\to V$, this just means that $(TV,\omega_V)$ is a trivial symplectic bundle.

By the Whitney immersion theorem, the manifold $N$ admits an immersion into $\R^5$.
Hence, one can also find an immersion $f:V\rightarrow \mathbb{R}^6$ just by thickening the first immersion.

Now, $\mathbb{R}^6$ is naturally endowed with the standard symplectic structure, which we denoted by $\omega_{std,6}$ to avoid confusion with the standard symplectic structure $\omega_{std}$ on $\mathbb{R}^8$ in the statement. The tangent bundle $(T\R^6,\omega_{std,6})$ is a symplectically trivial vector bundle.  
In particular, as both $(TV,\omega_V)$ and $(T\R^6,\omega_{std,6})$ are symplectically trivial bundles, one can find a monomorphism 
$$F:TV\longrightarrow T\mathbb{R}^6,$$
whose base map is $f$ and satisfying $\omega_V(\hat u, \hat v)=\omega_{std,6}(F(\hat u),F(\hat v))$ for any two $\hat u, \hat v\in TV$. 
Furthermore, as both $\omega_{std,6}$ and $\omega_V$ are exact, we trivially have $f^*[\omega_{std,6}]=[\omega_V]$.
In other words, the pair $(f,F)$ is an isosymplectic monomorphism.

The $h$-principle for isosymplectic immersions \cite[Section 3.4.2]{Gr} (c.f.\ \cite[Theorem 24.4.3]{hprinc} for a statement using our notation) then implies that there exists an isosymplectic immersion $\tilde f$ of $(V,\omega_V)$ into $(\mathbb{R}^6,\omega_{std,6})$, which is homotopic to $f$ among isosymplectic monomorphisms. 
We now claim that the normal bundle $\nu$ of the immersion $\tilde f$ is symplectically trivial.

For this, note that, by construction of $\tilde f$ via the $h$-principle, the pullback bundle $\tilde f^*T\mathbb{R}^6$ is isomorphic to the symplectic vector bundle $TV\oplus \nu_F$, where $\nu_F$ is the rank $2$ normal bundle of the symplectic monomorphism $F$.
Note also that $TV\oplus \nu_F= T\R^6$ symplectically.
Now, as previously remarked, $(TV,\omega_V)$ is a symplectically trivial bundle.
In particular, $\nu_F$ is a symplectically trivial bundle as well. (For instance, one can easily see that its first Chern class is zero.) 
Hence, $\nu$ is also symplectically trivial, as claimed. 

\medskip

It now follows from a claim in \cite[Lemma 3.4]{Gi} that $\tilde f$ can be perturbed to another isosymplectic immersion of $(V, \omega_V)$ such that $\tilde f|_{N\times \{0\}}$ has only transverse self-intersections, i.e. double points.
For completeness, we give a detailed proof of this claim, which has been kindly explained to us by V.\ Ginzburg. 

First, we cover $N$ by a finite number of balls $U_i$ such that $\tilde f|_{U_i\times (-\delta,\delta)}$ is an embedding for some $\delta>0$ small enough. 
Choose also some slightly larger sets $U_i'$ containing the closure of $U_i$ in its interior and satisfying as well that $\tilde f|_{U_i'\times (-\delta,\delta)}$ is an embedding. 
Taking the $U_i, U_i'$ small enough, one can always find coordinates $(r_1,s_1,r_2,s_2,r_3,s_3)$ on a neighborhood $W_i\subset \mathbb{R}^6$ of $\tilde f(U_i'\times (-\delta,\delta))$ such that 
\begin{itemize}
    \item[-] $\tilde f(U_i'\times (-\delta,\delta))\cap W_i = \{r_3=s_3=0\}$ (this is ensured by the Weinstein tubular neighborhood theorem applied to the symplectic submanifold $\tilde f(U_i'\times (-\delta,\delta))\cap W_i$);
    \item[-] $\tilde f(U_i')\cap W_i = \{r_3=s_3=s_2=0\}$ (this holds by choosing Hamiltonian flow-box coordinates of the hypersurface $\tilde f(U_i')$ inside $\tilde f(U_i'\times (-\delta,\delta)$).
\end{itemize}
Since $\tilde f$ is an embedding along $U_i'\times (-\delta,\delta)$, the self intersections of the image of $\tilde f\vert_{N\times\{0\}}$ that are contained inside $W_i$ must involve at most one point in $U_i\times\{0\}$ in the source.
Notice at this point that, by standard transversality arguments, a generic translation of $\tilde f(U_i')\cap W_i$ in the normal directions $s_2,r_3,s_3$ will put it in general position with respect to $\tilde f((N\setminus U_i') \times \{0\})$. 
Choose one of these translations (which can moreover be chosen arbitrarily close to the identity); this is an ambient Hamiltonian diffeomorphism of $W_i$, so one can cut off the generating Hamiltonian by a function that is equal to $1$ in $U_i$ and $0$ close to the boundary of $U_i'$ and more generally of $W_i$. 
Doing so we can construct an (arbitrarily $C^\infty$-close to the identity) compactly supported Hamiltonian diffeomorphism $\phi_1$ of $W_i$ which restricts to $U_i$ as the required translation. 
In particular, the embedding $h=\phi_1\circ \tilde f|_{U_i}\times (-\delta,\delta)$ extends as an isosymplectic immersion $\tilde f' (V,\omega_V) \longrightarrow \mathbb{R}^6$ equal to $\tilde f$ away from $U_i'\times (-\delta,\delta)$ and such that $\tilde f'(U_i\times\{0\})$ intersects transversely $\tilde f'((N\setminus U_i')\times \{0\})$, and hence all self intersections of $\tilde f'$ involving at least one point in $U_i$ are transverse. 
Doing this perturbation first along $U_1$, we obtain an isosymplectic immersion $\tilde f_1$ such that $\tilde f_1|_{N\times \{0\}}$ only has transverse self-intersections along $U_1$.
We repeat the process in $U_2$ to construct another immersion $\tilde f_2$ such that the self-intersections along $N\times \{0\}$ are transverse along $U_2$, and choosing the translation (and hence the Hamiltonian diffeomorphism) close enough to the identity the self-intersections will remain transverse also along $U_1$. 
Doing this inductively for each $U_i$, we end up with the required isosymplectic immersion, which we still denote by $\tilde f$.

Since the self-intersections of $\tilde f$ along $N\times \{0\}$ are only double points, we can undo these intersections in $\mathbb{R}^7$ by slightly pushing off the immersion in the last coordinate of $\mathbb{R}^7$ near one of the two points mapped to each double point.
We obtain in this way an embedding $e: N\times (-\delta,\delta) \rightarrow \mathbb{R}^7$, for a small enough $\delta>0$, satisfying $e^*\omega_{std,6}=\omega_V|_{N\times(-\delta,\delta)}$.
To simplify the notation, we rename $N\times (-\delta,\delta)$ as $V$ and the restriction of $\omega_V$ to $N\times (-\delta,\delta)$ as $\omega_V$. 
Seeing $\mathbb{R}^7$ as $\mathbb{R}^7\times \{0\}$ in $\mathbb{R}^8$, we have thus an embedding into $\R^8$, that we still denote by $e$, such that $e^*\omega_{std}=\omega_V$, which admits a trivial symplectic normal bundle $\nu'=\nu \oplus \langle \pp{}{u_2},\pp{}{v_2}\rangle$, where $u_2,v_2$ denote the last two coordinates of $\mathbb{R}^8$.
Note that $v_2$ can be chosen to be, without loss of generality, equal to the last coordinate $y_4$ of $\R^8$ that satisfies $\R^7=\{y_4=0\}\subset \R^8$.
By construction of the embedding $V\hookrightarrow \R^7$, via standard symplectic normal form theorems, one can find symplectic coordinates $u_1,v_1$ in a small neighborhood $e(V)$ that span the fibers of the (trivial) bundle $\nu$.
A neighborhood of $e(V)$ is then symplectomorphic to $U=V\times (-\delta,\delta)^4$ with symplectic form $\omega_V+\sum_{i=1}^2 du_i\wedge dv_i$, as claimed.
\end{proof}

\begin{Remark}\label{rem:anydim}
Notice that by considering the trivial symplectic product $\mathbb{R}^8\times \mathbb{R}^{2n-8}$, with $2n>8$, and renaming coordinates it follows that there is an isosymplectic embedding of $(V,\omega_W)$ into $(\mathbb{R}^{2n}, \omega_{std})$ and a tubular neighborhood $U\cong V\times (-\delta,\delta)^{2n-4}$ with coordinates $(u_i,v_i)$ such that $\omega_{std}|_U= \omega_V + \sum_{i=1}^{n-2} du_i \wedge dv_i$, with $v_{n-2}=y_n$.
\end{Remark}

\begin{Remark}\label{rem:dim6}
    In order to improve Theorem \ref{thm:main1} to cover ambient symplectic manifolds of dimension six, the only missing step is constructing an isosymplectic embedding of the symplectic manifold $(V,\omega_V)$ into $\mathbb{R}^6$ as in Proposition \ref{prop:embR8} (i.e. with trivial normal bundle and lying on a hyperplane). 
    A similar situation arose in Ginzburg's Hamiltonian counterexamples to the Seifert conjecture, which were first constructed in $\mathbb{R}^8$ \cite{Gi}. 
    It was later that the missing isosymplectic embedding into $\mathbb{R}^6$ was constructed in \cite{Gi2}, implying the existence of counterexamples in $\mathbb{R}^6$.
    The manifold $(V,\omega_V)$ can be replaced by the symplectization of any three-manifold equipped with a Hamiltonian structure that is robustly non-stable and satisfies the required formal properties to admit an embedding in $\mathbb{R}^6$. 
    The only examples of such Hamiltonian structures that we know are (exact) non $\mathbb{R}$-covered volume-preserving Anosov flows with coorientable stable and unstable foliations.
\end{Remark}

We will now use the embedding to show that we can embed the manifold $N$ as an invariant submanifold of the characteristic foliation of an arbitrary hypersurface, in a way that the induced dynamics are the Anosov flow constructed in Theorem \ref{thm:An}. An embedding inducing some dynamics fixed a priori along a hypersurface in $W$ is called a \emph{Hamiltonian embedding} of $(N,X)$ into $W$, following the terminology introduced in \cite{CP}. 
We will keep track of the normal form of the symplectic structure in the normal bundle of the invariant submanifold.

\begin{prop}\label{prop:firstemb}
    Let $e:M\longrightarrow W$ be an embedded hypersurface in a symplectic manifold $(W,\Omega)$ with $\dim W=2n\geq 8$. 
    Then, there is a $C^0$-small isotopy $e_s: M\longrightarrow W$, compactly supported near a point and with $e_0=e$, and a subset $\widehat U \subset M$ such that 
    \begin{itemize}
        \item[-] $\widehat U\cong N \times (-\delta,\delta)^{2n-4}$,
        \item[-] ${e_1}^*\Omega|_{\widehat U}= \widetilde \omega + \sum_{i=1}^{n-2} \d u_i\wedge \d v_i$, 
    \end{itemize} 
    where $(u_i,v_i)$ are coordinates in $(-\delta,\delta)^{2n-2}$.
\end{prop}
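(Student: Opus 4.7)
The plan is to embed a small local model of the required hypersurface together with its prescribed tubular symplectic structure inside a Darboux chart of $(W,\Omega)$ around a chosen point of $e(M)$, and then to deform $e(M)$ onto this model by a $C^0$-small ambient isotopy.

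Fix any $p \in e(M)$ and a Darboux chart $\varphi\colon B_W \to B \subset (\R^{2n},\omega_{std})$ with $\varphi(p)=0$. By Remark~\ref{rem:anydim}, there is an isosymplectic embedding $\Phi\colon V \times (-\delta,\delta)^{2n-4} \hookrightarrow (\R^{2n},\omega_{std})$ satisfying $\Phi^*\omega_{std}=\omega_V+\sum_{i=1}^{n-2}\d u_i\wedge \d v_i$. Let $\Phi_0$ be its restriction to the slice $\{t=0\}\cong N\times(-\delta,\delta)^{2n-4}$ and let $\widehat M$ be its image. Since $\omega_V=\widetilde\omega+\d(t\alpha)$ and both $t$ and $\d t$ vanish on this slice, $\Phi_0^*\omega_{std}=\widetilde\omega+\sum_{i=1}^{n-2}\d u_i\wedge \d v_i$, which is precisely the form required by the statement. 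Shrinking $\delta$ and rescaling $\Phi$ by a factor $\lambda$ (which only rescales $\widetilde\omega$ by $\lambda^2$, hence preserves the Anosov nature of the kernel flow of $\widetilde\omega$) makes $\widehat M$ fit inside a ball of arbitrarily small radius $\rho$. Next, composing with a translation and a linear symplectomorphism of $(\R^{2n},\omega_{std})$—using that $\operatorname{Sp}(2n,\R)$ acts transitively on hyperplanes of $\R^{2n}$ via their characteristic lines—we reposition $\widehat M$ so that $0 \in \widehat M$ and $T_0\widehat M = \d\varphi(T_pe(M))$, without altering $\Phi_0^*\omega_{std}$.

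After this preparation, $\varphi(e(M))$ and $\widehat M$ are two smooth hypersurfaces through $0$ with matching tangent hyperplane $H$, and $\widehat M$ lies inside $B(0,\rho)$. Locally each is a graph of a smooth function over $H$ vanishing to first order at $0$, so the two graphs differ by $O(\rho^2)$ on $B(0,\rho)$. A cutoff-function interpolation then yields an ambient isotopy of $\R^{2n}$, compactly supported near $0$ and of $C^0$-size $O(\rho^2)$, which moves $\varphi(e(M))$ onto a hypersurface that agrees with $\widehat M$ on a neighborhood of $\widehat M$ itself. Pulling this back through $\varphi^{-1}$ and composing with $e$ produces the required $e_s\colon M \to W$, and setting $\widehat U := e_1^{-1}(\varphi^{-1}(\widehat M))$ gives the desired open subset of $M$, with the stated diffeomorphism type and induced symplectic form.

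The main delicacy is the balance in the last step: the ambient isotopy must remain $C^0$-small while also sweeping over the entirety of $\widehat M$, so that $\widehat U$ is isomorphic to the full $N\times(-\delta,\delta)^{2n-4}$ rather than a proper open subset of it. This is resolved by the rescaling of $\Phi$ in the first step, which lets $\widehat M$ be taken arbitrarily small compared to the region on which the ambient isotopy acts.
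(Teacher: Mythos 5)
There is a genuine gap in the final grafting step. Your plan hinges on the claim that, after rescaling and repositioning, $\varphi(e(M))$ and $\widehat M$ are both graphs over the common tangent hyperplane $H$ at $0$, differing by $O(\rho^2)$, so that a cutoff interpolation produces a $C^0$-small isotopy carrying $\varphi(e(M))$ onto a hypersurface containing all of $\widehat M$. But $\widehat M\cong N\times(-\delta,\delta)^{2n-4}$ contains the closed $3$-manifold $N$, so it is not a graph over $H$ at any scale: the projection to $H$ cannot be injective on a closed submanifold that ``wraps around'', and over points of $H$ the piece $\widehat M$ has several sheets. Rescaling shrinks the diameter of $\widehat M$ but does not remove this topological obstruction, so the ``two graphs differ by $O(\rho^2)$'' interpolation only matches the hypersurfaces on a small graphical portion of $\widehat M$ near $0$; it does not produce a hypersurface containing the full $N\times(-\delta,\delta)^{2n-4}$, which is exactly what the proposition requires (you flag this as the main delicacy, but the proposed fix by rescaling does not address it). There is also a smaller issue: composing $\Phi$ with a dilation replaces the induced form by $\lambda^2(\widetilde\omega+\sum \d u_i\wedge\d v_i)$, and while the $\d u_i\wedge\d v_i$ part can be absorbed into new coordinates, the factor $\lambda^2$ on $\widetilde\omega$ cannot, so the literal normal form of the statement is not obtained (this is harmless for the application to Theorem \ref{thm:main1}, but should be noted).

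The paper avoids the grafting problem entirely by using the extra property built into Proposition \ref{prop:embR8} that your argument never invokes: the whole symplectic piece $V=N\times(-\varepsilon,\varepsilon)$ is embedded inside the hyperplane $\{y_n=0\}$ of the Darboux chart, with the normal coordinate $v_{n-2}$ equal to $y_n$. Since $e(M)\cap U=\{y_n=0\}$, the hypersurface already contains all of $V$ from the start; the only defect is that the induced form near $N\times\{0\}$ is $\widetilde\omega+\d(t\alpha)+\sum_{i=1}^{n-3}\d u_i\wedge\d v_i$ rather than the desired $\widetilde\omega+\sum_{i=1}^{n-2}\d u_i\wedge\d v_i$. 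The fix is then a fiberwise rotation in the two-dimensional $(t,v_{n-2})$-plane, interpolated via the radial coordinate of the remaining normal directions, which tilts the hypersurface from the slice $\{v_{n-2}=0\}$ to the slice $\{t=0\}$ near the core; this is manifestly a $C^0$-small compactly supported isotopy and yields the stated normal form on $\widehat U$. If you want to repair your argument, you should replace the graphical interpolation by this kind of local rotation, and for that you must first arrange (as in Proposition \ref{prop:embR8}) that the isosymplectic embedding of $V$ lands in the hyperplane defining $e(M)$.
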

\begin{proof}
  Choose a Hamiltonian flow-box neighborhood diffeomorphic to $U \subset \mathbb{R}^{2n}$ near a point $p\in M$ with coordinates $(x_i,y_i)$, such that 
  $$ \omega_{std}|_{U}= \sum_{i=1}^n \d x_i\wedge \d y_i,$$
  $$ e(M)\cap U=\{y_n=0\}. $$
  By Proposition \ref{prop:embR8} and Remark \ref{rem:anydim}, there exist an isosymplectic embedding $f:V=N\times (-\varepsilon,\varepsilon) \longrightarrow U$ of $(V,\omega_V)$ into $U$ and a neighborhood $U'\cong N\times (-\varepsilon,\varepsilon) \times (-\delta,\delta)^{2n-4}\subset U$ containing $f(V)$ such that 
  $$ \omega_{std}|_{U'}=\omega_V + \sum_{i=1}^{n-2} \d u_i \wedge \d v_i= \widetilde \omega + \d(t\alpha) + \sum_{i=1}^{n-2} \d u_i \wedge \d v_i,  $$
  where $t$ is the coordinate in $(-\varepsilon,\varepsilon)$ and $u_i,v_i$ are coordinates in $(-\delta,\delta)^{2n-4}$.
  Recall that Proposition \ref{prop:embR8} allows us to choose $f$ such that $v_{n-2}=y_n|_{U'}$. Consider the first $2n-5$ factors of $(-\delta,\delta)^{2n-4}$, which is diffeomorphic to a ball $B^{2n-5}$, where we use spherical coordinates $(\hat r,\hat \varphi_i)$, with $\hat r\in (0,\hat \delta)$ and $i=1,...,2n-6$. 
  The inclusion of $M\cap U'$ into $U'$ is given by
  \begin{align*}
      j_0: N\times (-\varepsilon, \varepsilon)\times B^{2n-5} &\longrightarrow N\times (-\varepsilon,\varepsilon) \times B^{2n-5} \times (-\delta,\delta)_{v_{n-2}}\\
        (p, \hat t, \hat r, \hat \varphi_i) &\longmapsto (p, \hat t,  \hat r, \hat \varphi_i,0),
  \end{align*} 
  which corresponds to $\{v_{n-2}=0\}$. Consider a different embedding of the form 
  \begin{align*}
      j_1: N \times (-\varepsilon, \varepsilon)\times B^{2n-5} &\longrightarrow N\times (-\varepsilon,\varepsilon) \times B^{2n-5} \times (-\delta,\delta)_{v_{n-2}}\\
        (p, \hat t, \hat r, \hat \varphi_i) &\longmapsto (p, f_1(\hat t,\hat r), \hat r, \hat \varphi_i ,f_2( \hat t,\hat r)),
  \end{align*} 
  where $f_1$ and $f_2$ are smooth functions from $(-\varepsilon,\varepsilon)\times (0,\hat \delta)$ satisfying
  \begin{itemize}
      \item[-] $f_1(\hat r, \hat t)=\hat t$ and $f_2(\hat r, \hat t)=0$ for $\hat r$ close to $\hat \delta$,
      \item[-] $f_1(\hat r, \hat t)= 0$ and $f_2(\hat r, \hat t)=\hat t$ for $\hat r$ near $0$ and $\hat t \in (-\varepsilon', \varepsilon')$ for some $\varepsilon'<\varepsilon$.
  \end{itemize}
  
  In fact, we can choose a family of embeddings $j_s$ that interpolates between $j_0$ and $j_1$, just by considering a family of embedded curves $(f_1^s,f_2^s)$ interpolating between $(\hat t, 0)$ and $(f_1,f_2)$.
  It induces a family of embeddings of $e_s:M \longrightarrow W$, by considering the trivial extension of $j_s$ as $e$ away from $N\times (-\varepsilon,\varepsilon)\times B^{2n-5}$. 
  Observe now that in $\widehat U=U' \cap \{r<\tau\}$ for a small enough $\tau<\delta$, we have $M\cap \widehat U=\{t=0\}$. In particular, by construction we have $e_1^*\omega_{std}|_{\widehat U}=\widetilde \omega + \sum_{i=1}^{n-2}\d u_i\wedge \d v_i$ as claimed.
\end{proof}

\subsection{Introducing normal hyperbolicity}

Observe that as a consequence of Proposition \ref{prop:firstemb}, we can perturb $M$ into another hypersurface $\widetilde M$ whose characteristic foliation admits a three-dimensional invariant submanifold where the dynamics correspond to a non $\mathbb{R}$-covered Anosov flow.
We want this invariant submanifold to persist under perturbations, i.e. we need it to be a normally hyperbolic invariant submanifold. 

\begin{prop}\label{prop:addhyper}
    Let $e:M\longrightarrow W$ be an embedded hypersurface in a symplectic manifold $(W,\Omega)$ of any even dimension. 
    Assume that there is an odd-dimensional compact submanifold embedded via $f:N \longrightarrow M$ of codimension $2k$, with trivial tubular neighborhood $U\cong N\times D^{2k}$, such that 
        $$e^*\Omega|_U= \widetilde \omega + \sum_{i=1}^k dx_i \wedge dy_i,$$
    where $\widetilde \omega$ is a Hamiltonian structure on $N$. 
    For any $r\geq 1$ integer, there is a $C^1$-close embedding $e_1: M\longrightarrow W$ such that $(e_1\circ f)^*\Omega=\widetilde \omega$ and $e_1\circ f(N)$ is a $r$-normally hyperbolic invariant submanifold of the characteristic foliation of $e_1(M)$.   
\end{prop}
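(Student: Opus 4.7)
The plan is to realise the perturbation $e_1$ as a graph inside a small symplectic thickening of $e(M)$ in $W$. By the standard neighbourhood theorem for hypersurfaces in symplectic manifolds (Gotay's theorem), a tubular neighbourhood of $e(M)$ in $W$ is symplectomorphic to $(M\times(-\varepsilon,\varepsilon),\, e^*\Omega+\d(z\alpha))$, where $z$ is the coordinate on the interval factor and $\alpha$ is any one-form on $M$ not vanishing on the characteristic line field of $e^*\Omega$. One can choose $\alpha$ so that on $U\cong N\times D^{2k}$ it equals $\pi_N^*\alpha_0$, where $\pi_N\colon U\to N$ is the projection and $\alpha_0$ is a one-form on $N$ normalised by $\alpha_0(R)=1$, with $R$ the generator of $\ker \widetilde\omega$. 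The embedding $e_1$ will then be the graph of a small function $h\colon M\to(-\varepsilon,\varepsilon)$ supported in $U$; a direct computation shows
\begin{equation*}
    e_1^*\Omega \;=\; e^*\Omega + \d h\wedge\alpha + h\,\d\alpha.
\end{equation*}

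Next I prescribe $h$ explicitly. Let $\chi$ be a bump function on $D^{2k}$ equal to $1$ near the origin and vanishing near the boundary, and set
\begin{equation*}
    h(p,x,y) \;=\; \chi(x,y)\sum_{j=1}^k c_j\, x_j y_j,
\end{equation*}
for positive constants $c_j$ to be determined. Since both $h$ and $\d h$ vanish to first order along $N=\{x=y=0\}$, one has $(e_1\circ f)^*\Omega = \widetilde\omega$, so $f(N)$ is still an invariant submanifold of the characteristic foliation of $e_1(M)$. Furthermore, by replacing $\chi$ with $\chi(\cdot/\delta)$ for a small $\delta>0$ one can shrink the support of $h$ and make its $C^1$-norm arbitrarily small without affecting the second-order jet of $h$ along $N$; in particular $e_1$ can be made $C^1$-close to $e$ and therefore remains an embedding.

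The remaining point is to check that $f(N)$ is normally hyperbolic. Splitting a vector $\xi$ on $U$ as $\xi=V_N+V_{xy}$ along $TN\oplus T\R^{2k}$ and solving $\iota_\xi(e_1^*\Omega)=0$, the $TN$-component yields $V_N=R$ on $N$, while the $\d x_i,\d y_i$ components yield
\begin{equation*}
    V_{xy} \;=\; \alpha(V_N)\sum_{i=1}^k \bigl((\partial_{y_i}h)\partial_{x_i}-(\partial_{x_i}h)\partial_{y_i}\bigr),
\end{equation*}
that is, $\alpha(V_N)$ times the Hamiltonian vector field of $h$ with respect to $\sum \d x_i\wedge\d y_i$. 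For our choice of $h$, and using $\alpha(R)=1$, the linearisation at $N$ of the normal component of the characteristic field is precisely $\sum_{j=1}^k c_j(x_j\partial_{x_j}-y_j\partial_{y_j})$, with hyperbolic spectrum $\pm c_j$. Choosing every $c_j$ strictly larger than the maximal Lyapunov exponent of the Reeb flow of $\widetilde\omega$ on the compact manifold $N$ then produces the invariant splitting and the rate inequalities required in the definition of a normally hyperbolic invariant submanifold.

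The step I expect to be the most delicate is the final kernel computation: one has to verify that the extra contributions $\d h\wedge\alpha$ and $h\,\d\alpha$ interact correctly with the splitting $TN\oplus T\R^{2k}$ so that the corrections to $V_N$ remain of higher order in $(x,y)$, leaving the linearisation of the normal component genuinely governed by the Hamiltonian vector field of $h$. Once this is settled, the choice of the $c_j$ beating the tangential Lyapunov rate on $N$ is a straightforward linear-algebra adjustment.
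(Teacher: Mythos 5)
Your proposal is correct, and the key perturbation is the same as the paper's: near $N$ your induced form is $e^*\Omega + \d(h\alpha)$ with $h=\sum_j c_j x_j y_j$, i.e.\ exactly the paper's $\widehat\omega_U=\omega_M|_U+C\sum_i \d(x_iy_i\alpha)$, and the hyperbolic normal model $\sum_j c_j(x_j\partial_{x_j}-y_j\partial_{y_j})$ is identical. Where you differ is in the implementation of the two steps. First, you realize the modified form directly as the graph of $h$ inside the Gotay normal-form neighborhood $(M\times(-\varepsilon,\varepsilon),\,e^*\Omega+\d(z\alpha))$; the paper instead first declares the target two-form on $U$ and then runs a Moser-type isotopy (adapted from \cite[Lemma 49]{C}) to produce an embedding inducing it. Your graph trick is more economical and gives the $C^1$-smallness for free from $\|h\|_{C^1}=O(\delta)$. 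Second, for normal hyperbolicity the paper compares the characteristic field with a model field $Z$ via the volume-form characterization $\iota_Z\mu=(\widehat\omega_M)^n$ and checks agreement to first order along $N$, whereas you solve $\iota_\xi(e_1^*\Omega)=0$ blockwise. The point you flag as delicate does close up cleanly: since $\alpha=\pi_N^*\alpha_0$ has no $\d x_i,\d y_i$ components and $\d h$ has only such components, the normal block gives $V_{xy}=\alpha_0(V_N)X_h$, and then the cross term $\d h(\xi)=\alpha_0(V_N)\,\d h(X_h)$ vanishes identically, so the tangential equation reduces to $\iota_{V_N}(\widetilde\omega+h\,\d\alpha_0)=0$ and the correction to $V_N$ is $O(|(x,y)|^2)$; together with the fact that $V_{xy}$ vanishes identically on $N$, the linearization along $N$ is lower-triangular with normal block $\mathrm{diag}(c_j,-c_j)$, and choosing the $c_j$ to dominate the tangential expansion rates of the Reeb flow on the compact $N$ finishes the argument exactly as in the paper.
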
 

Here, by ``$r$-normally hyperbolic invariant submanifold of the characteristic foliation'' we mean a submanifold that is a union of ($1$-dimensional) leaves of the latter, and $r$-normally hyperbolic for any vector field directing said characteristic foliation.

\begin{proof}
    Fix a volume form $\mu$ in $M$ that restricts to a split volume form $\mu_N\wedge dx_1\wedge dy_1 \wedge ... \wedge dx_k \wedge dy_k$ on $U=N\times D^{2k}_{\delta}$ (where $\delta$ denotes the radius of the disk), for some volume form $\mu_N$ in $N$. 
    Let $X$ be the vector field defined by  $\iota_X\mu= (\omega_M)^n$, where $\omega_M=e^*\Omega$, and $Y$ be the vector field in $N$ defined by $\iota_Y\mu_N= \widetilde \omega^{n-k}$. Notice that $X|_U=(Y,0)$, and in particular $N= N\times \{0\}\subset U$ is an invariant submanifold of $\ker \omega_M$. Fix some $\alpha \in \Omega^1(U)$ such that $\alpha(X)=1$; due to the fact that $X\vert_U = Y$, we can moreover choose it so that it is independent of the $D^{2k}_\delta$ factor, and more precisely so that $\alpha \in \Omega^1(N)$. 
    For a constant $C>0$ to be determined later, consider then the two-form 
    $$\widehat \omega_U=\omega_M|_U + C\sum_{i=1}^k d(x_iy_i\alpha).$$

    For any fixed $C$, we can shrink $U$ enough so that $\eta=C \cdot \sum_{i=1}^k x_iy_i\alpha$ is arbitrarily $C^1$-small in $U$, and thus $\widehat \omega_U$ defines a Hamiltonian structure on $U$. 
    Notice that $\ker \widehat \omega_U$ coincides along $N$ with $\ker \omega_M|_U$ and thus with $\ker \widetilde \omega$. Hence $N$ is an invariant submanifold of the characteristic foliation of $\widehat \omega_U$ too. 
    First, we will show that we can perturb the embedding of $M$ to another embedding $e_1:M\longrightarrow W$ such that $e_1^*\Omega|_U=\widehat \omega_U$ (after possibly shrinking $U$), and then we will show that $e_1\circ f(N)$ is a normally hyperbolic invariant submanifold of the characteristic foliation of $e_1(M)$; this will conclude the proof. \\
    
    To argue the existence of a possibly smaller neighborhood $U'=N\times D^2_{\delta'} \subset U$ and an embedding $e_1:M \longrightarrow W$ such that $e_1^*\Omega|_{U'}=\widehat \omega_U$, we will adapt \cite[Lemma 49]{C}. 
    Consider a tubular neighborhood $V\cong U\times (-\varepsilon,\varepsilon)$ of $U$ inside $W$ where $\Omega$ takes the form
    $$ \Omega_0=d(t\alpha)+\omega_M, $$
    where $t$ is a coordinate in $(-\varepsilon,\varepsilon)$. Up to choosing $\varepsilon, \delta'$ small enough, the path 
    $$\Omega_s=d(t\alpha)+(1-s)\omega_M + s \widehat \omega_M$$
    is made of two-forms which are symplectic on $V$. 
    Notice also that if $\iota :U \times \{0\}\rightarrow V$ is the inclusion of the zero section, we have 
    \begin{equation}\label{eq:zeroS}
        \iota^*\Omega_1=\widehat \omega_M.
    \end{equation} Observe as well that $\Omega_s-\Omega_0=sKd\eta$, and $\eta$ is a one-form that vanishes along $TM|_N$. 
    Following Moser's path argument, we consider the vector field $Z_s$ determined by the equation
            $$\iota_{Z_s}\Omega_s=sK\eta. $$
    The vector field $Z_s$ integrates into a flow
    $$ \varphi_s:  V \longrightarrow V. $$
    Since $\eta|_N=0$, the vector field $Z_s$ vanishes along the compact submanifold $N\times \{0\}\subset V$, and thus the flow $\varphi_s$ is defined up to time one on a sufficiently small neighborhood $V'=N\times D^2_{\delta'}\times (-\varepsilon',\varepsilon')$ of $N$, where $\delta'<\delta$ and $\varepsilon'<\varepsilon$. 
    By construction, this flow satisfies $\varphi_s^*\Omega_s=\Omega_0$. 
    
    With a slight abuse of notation, for simplicity we denote again by $\varphi_s$ the restriction of $\varphi_s$ to $V'$. Let $r$ be a radial coordinate in the product $D^2_{\delta'}\times (-\varepsilon',\varepsilon')$ defined for $r\in (0,\tilde \delta)$ for some small enough $\tilde\delta$. Consider a smooth function $f:[0,\tilde \delta)\rightarrow [0,1]$ equal to $1$ near $r=0$ and equal to $0$ near $r=\tilde \delta$. Consider the family of vector fields $\widehat Z_s = f(r)Z_s$. 
    Its flow $\psi_s: V' \longrightarrow V$ defines a diffeomorphism from $V'$ onto its image. 
    Notice that because $Z_s$ vanishes along $N\times \{0\}$, if we take $f$ to vanish away of a sufficiently small neighborhood of $N\times \{0\}$, the vector field $\widehat Z_s$ is arbitrarily $C^0$-small and hence its flow is $C^0$-close to the identity. In other words:
    \begin{itemize}
        \item[-] $\psi_s$ has compact support contained in the interior of $V'$ and image contained in the interior of $V$,
        \item[-] it is is arbitrarily $C^0$-close to the identity,
        \item[-] there is a smaller neighborhood $\widetilde V = \widetilde U\times (-\tilde \varepsilon, \tilde \varepsilon)\subset V'$ such that $\psi_1|_{\widetilde V}=\varphi_1|_{\widetilde V}$.
    \end{itemize} 
    The hypersurface $M$ intersects $V$ along $U \times \{0\}$, i.e. $M$ corresponds to the inclusion of the zero section $\iota: U\times \{0\} \rightarrow V$. 
    Consider now the embedding
    $$ h= \psi_1\circ \iota: U\longrightarrow V. $$
    This coincides with $e$ near the boundary of $V$, hence in particular it extends naturally to an embedding 
    $$e_1: M\longrightarrow W, $$
    such that $e_1(M\setminus U)=e(M\setminus U)$. 
    Finally, we have
    $${e_1}^*\Omega|_{\widetilde U}= \iota^*{\psi_1|_{U}}^*\Omega|_{\widetilde U}=\iota^*\Omega_1= \widehat \omega_U, $$
    where we used Equation \eqref{eq:zeroS}. This establishes the existence of $e_1$.\\

    To conclude, let us show that for a suitable choice of the constant $C>0$, the characteristic foliation of $e_1(M)$ has $N$ as a normally hyperbolic invariant submanifold of its characteristic foliation. 
    We denote by $\widehat \omega_M$ the pullback two-form $e_1^*\Omega$. Consider the vector field
    $$Z= X +  \sum_{i=1}^k C x_i \pp{}{x_i} - C y_i \pp{}{y_i},$$
    on $U$. 
    For any a priori given $r$, by compactness we can choose a sufficiently large $C$ so that the submanifold $N\times \{0\}$ is a $r$-normally hyperbolic invariant submanifold of $Z$.

    Even if $Z$ does not span $\ker \widehat \omega_M$, we will see that it coincides up to second order with a vector field $\widehat Z$ spanning $\ker \widehat \omega_M$ along $N$. 
    This is enough to deduce that $\widehat Z$ has $N$ as a $r$-normally hyperbolic invariant submanifold. 
    Indeed, the fact that $Z$ and $\widehat Z$ coincide at the $0$-order implies that $N$ is also an invariant submanifold of $\widehat Z$. 
    In addition, since $r$-normal hyperbolicity only depends on the first derivatives of the vector field along the invariant submanifold, the fact that $Z$ and $\widehat Z$ coincide at first order along $N$ implies that $N$ is a $r$-normally hyperbolic invariant submanifold of $\widehat Z$, as desired. 
    \\

    Let us hence prove the existence of such a $\widehat Z$. Let $\beta$ be a one-form in $U$ such that $\beta(Z)=1$, and consider the volume form $\mu= \beta \wedge (\widehat \omega_M)^{n}$. Notice that 
    \begin{align*}
        \iota_Z \mu&= (\widehat \omega_M)^n + n\cdot \iota_Z \widehat \omega_M \wedge  (\widehat \omega_M)^{n-1} \wedge \beta,
    \end{align*}
    and since $\iota_Z \widehat \omega_M =C \sum_{i=1}^k x_iy_i \iota_Xd\alpha$, we deduce that
    \begin{align*}
        \iota_Z \mu&= (\widehat \omega_M)^n + n \left(C\sum_{i=1}^k x_iy_i \iota_Xd\alpha \right) \wedge (\widehat \omega_M)^{n-1} \wedge \beta,
    \end{align*}
    Let $\widehat Z$ be the vector field such that $\iota_{\widehat Z}\mu=(\widehat \omega_M)^n$, i.e. it spans the kernel of $\widehat \omega_M$.
    Notice that at any point $p\in N$, the two-forms $\iota_Z\mu$ and $\iota_{\widehat Z}\mu$ coincide up to second order in the variables $x_i,y_i$. 
    This implies that $Z$ and $\widehat Z$ coincide up to second-order terms in the normal direction of $N$.
    We deduce that not only $Z$ and $\widehat Z$ but also their respective linearizations coincide along $N$. 
    Thus the differential of their flows coincide along $N$ and the submanifold is also a $r$-normally hyperbolic invariant submanifold of $\widehat Z$. 
    This shows that $e_1\circ f(N)$ is a $r$-normally hyperbolic invariant submanifold of the characteristic foliation of $e_1(M)$.
\end{proof}

\subsection{Proof of Theorem \ref{thm:main1}} 
Let $M$ be a hypersurface in $(W,\Omega)$. 
Let $N$ be the $3$-dimensional homology sphere and $X$ the volume-preserving Anosov flow on $N$ given by \Cref{thm:An}.
Fix a volume form $\mu$ on $N$ preserved by $X$; we then have $\iota_X\mu=\widehat \omega$, which is a Hamiltonian structure that is necessarily exact due to the fact that $H^2(N;\R)=\{0\}$. 
By \Cref{prop:embR8,prop:addhyper}, there is a $C^0$-perturbation $\widetilde M$ of $M$, equipped with the Hamiltonian structure $\widetilde \omega$ obtained by pulling back $\Omega$ by the inclusion map, and an embedding $e:N\rightarrow \widetilde M$ such that $e^*\widetilde \omega=\widehat \omega$ and $e(N)$ is a $r$-normally hyperbolic invariant submanifold of the characteristic foliation of $\widetilde M$, where we choose any $r\geq 3$.

Let now $M'$ be a hypersurface in a sufficiently small $C^3$-neighborhood of $\widetilde M$ with induced Hamiltonian structure $\omega'$.
Then,
according to \Cref{thm:normal_hyperbolic_stable}, there is a $C^r$-embedding $f:N\rightarrow M'$ that is $C^2$-close to $e$ and such that $f(N)$ is a $r$-normally hyperbolic invariant submanifold of the characteristic foliation on the hypersurface $M'$. 
In particular, the two-form $f^*\omega'$ of $C^{r-1}$-regularity is $C^1$-close to $\widehat \omega$. 
Assume by contradiction that $\omega'$ is stabilizable, i.e. there is a (smooth) one-form $\lambda'\in \Omega^1(M')$ stabilizing $\omega'$. 
Then, $f^*\lambda'$ is a stabilizing one-form of $f^*\omega'$ of class $C^{r-1}$. 
Using that $r\geq 3$ we reach a contradiction with \Cref{lem:nonstab} together with Remark \ref{rem:reguAnosov}. 
This concludes the proof. 
\hfill \qedsymbol{}

\begin{Remark}
    Convex hypersurfaces in contact manifolds bear some analogies with stable hypersurfaces in symplectic manifolds, like the existence of a tubular neighborhood foliated by hypersurfaces with diffeomorphic characteristic foliations. A recent preprint by Chaidez \cite{Ch} shows that on any high dimensional contact manifold there are hypersurfaces (in certain isotopy classes) with a $C^2$-neighborhood that contains no convex hypersurface, thus establishing a version of Theorem \ref{thm:main1} in the contact setting.
\end{Remark}

\begin{Remark}\label{rem:regu}
    The argument breaks down if one looks at a hypersurface that is only $C^2$-close to $M$. In this case, the new invariant submanifold is given by an embedding $f:N\rightarrow M'$ that is only $C^1$-close to $e$. In particular, the Hamiltonian structure $f^*\omega'$ is only $C^0$-close to $\widehat \omega$ and we cannot apply Lemma \ref{lem:nonstab}. Indeed, the characteristic foliation is spanned by a vector field that is (a priori) only $C^0$-close to an Anosov vector field, which does not imply that it is orbit equivalent to that Anosov field.
\end{Remark}

\bigskip

\section{Non-density of non-degenerate stable Hamiltonian structures} \label{s:T2}

The goal of this section is to prove Theorem \ref{thm:stably_degenerate} about the non-density of non-degenerate stable Hamiltonian structures. First, we give a simple degeneracy criterion for a Hamiltonian structure: the existence of a closed orbit in the regular level set of a first integral of the characteristic foliation, see \Cref{lem:degenerate_Hamilt_Reeb}. 
Secondly, we make a semi-local construction of a \shs in the product of any contact manifold with $I\times S^1$, which satisfies that any \shs in a $C^2$-neighborhood of this one necessarily satisfies the degeneracy criterion, this is the content of \Cref{prop:perturbing_suspension}. 
Finally, we prove \Cref{thm:stably_degenerate} by showing that this local construction can be introduced via a stable homotopy in a class of stable homotopy classes that we call ``regular" and that we introduce below.

\subsection{Regular stable homotopy classes}\label{ss:regular}

We first give the precise definition of the stable homotopy classes to which our theorem applies and then find sufficient conditions for a \shs to be in this type of homotopy classes.
\begin{defi}\label{def:goodSHS}
    A stable homotopy class in $M$ is \emph{regular} if it is representable by a \shs $(\lambda,\omega)$ such that $\d\lambda$ is a non-zero constant multiple of $\omega$ on some open set $U\subset M$.
\end{defi}
Notice that up to scaling $\lambda$, we might assume that in some open set (namely, a connected component of $U$) we have $\d\lambda=\pm \omega$.
In three dimensions, it follows from \cite[Proposition 3.31]{CV} that every stable homotopy class is regular.
In arbitrary dimensions, a contact type two-form $d\alpha$ trivially belongs to a regular stable homotopy class. 
We now also prove that one sufficient condition for $(\lambda,\omega)$ to be in a regular stable homotopy class is that $\lambda$ is closed on some open set.

\begin{lemma}
    Let $(\lambda,\omega)$ be a \shs such that $d\lambda=0$ in some open set $U\subset M$. Then $(\lambda,\omega)$ belongs to a regular stable homotopy class.
\end{lemma}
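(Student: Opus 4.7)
The plan is to exploit the following convexity observation: if $\lambda_0$ and $\lambda_1$ are both stabilizing one-forms for the same Hamiltonian structure $\omega$, then so is $\lambda_s = (1-s)\lambda_0 + s\lambda_1$, as both conditions $\iota_R d\lambda = 0$ and $\lambda \wedge \omega^n > 0$ are preserved under convex combinations. In particular, the straight-line path $\lambda_s$ is a stable homotopy (with $\omega$ kept constant) from $(\lambda_0, \omega)$ to $(\lambda_1, \omega)$. Therefore, it suffices to construct a stabilizing one-form $\lambda' = \lambda + c\mu$ for $\omega$, with $c > 0$ small and $\mu \in \Omega^1(M)$, satisfying $d\lambda' = c\omega$ on some open set. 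Equivalently, I seek $\mu \in \Omega^1(M)$ with $\iota_R d\mu = 0$ globally and $d\mu = \omega$ on some open subset of $U$.

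Since $d\lambda = 0$ on $U$, by Frobenius the distribution $\ker\lambda|_U$ is integrable, with leaves symplectic with respect to $\omega$; combined with a flow-box for the Reeb field $R$, this yields coordinates $(t,z) \in (-\varepsilon,\varepsilon) \times B$ on a neighborhood $V \subset U$ of a chosen point $p \in U$ with $R = \partial_t$, $\lambda|_V = dt$, and $\omega|_V = \sum_i dx_i \wedge dy_i$ for a Darboux ball $B$. Put $\beta := \sum_i x_i\, dy_i$ and let $\chi : B \to [0,1]$ be a cutoff equal to $1$ on a smaller ball $B' \Subset B$ and supported in the interior of $B$. On $V$, the one-form $\mu_V := \chi(z)\,\beta(z)$ depends only on $z$ and satisfies $\iota_R \mu_V = 0$; hence $\iota_R d\mu_V = L_R \mu_V - d\iota_R \mu_V = 0$, and $d\mu_V = d\chi \wedge \beta + \chi\,\omega$ coincides with $\omega$ on $B' \times (-\varepsilon,\varepsilon)$, which gives the desired local regularity.

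To globalize $\mu_V$ to a smooth one-form $\mu \in \Omega^1(M)$ retaining $\iota_R d\mu = 0$, the idea is to propagate along Reeb orbits into an $R$-saturated open neighborhood of a smaller slab inside $V$ and then taper smoothly to zero outside using a cutoff constant along Reeb trajectories; by shrinking the initial chart and exploiting the openness of $U$, one obtains a global $\mu$ with the required properties on an open subset of $B' \times (-\varepsilon,\varepsilon)$. Setting $\lambda' := \lambda + c\mu$ for small $c > 0$ then gives a regular stabilizing one-form for $\omega$ stably homotopic to $\lambda$, establishing the lemma. The main technical obstacle is precisely this globalization: naively extending $\mu_V$ by zero across the slices $B \times \{\pm\varepsilon\}$ fails to be smooth, while inserting a $t$-cutoff destroys the $R$-invariance that underlies the stability condition, so the Reeb-flow propagation has to be set up carefully to reconcile smoothness with the horizontality $\iota_R d\mu = 0$.
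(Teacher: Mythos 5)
Your local construction is fine, and your convexity remark about the space of stabilizing one-forms is correct, but the globalization step you defer to at the end is not a technicality --- it is the entire difficulty, and as described it fails. The condition $\iota_R\,\d\mu=0$ together with $\d(\d\mu)=0$ forces $L_R(\d\mu)=0$, i.e.\ the two-form $\d\mu$ must be invariant under the whole Reeb flow; in particular its support is a union of complete orbit closures, and any ``cutoff constant along Reeb trajectories'' interpolating between $1$ on a saturated neighborhood of your slab and $0$ elsewhere is a continuous non-constant first integral of $R$. Such a first integral need not exist (the Reeb flow of $(\lambda,\omega)$ restricted to the invariant set $\{\d\lambda=0\}$ can perfectly well be topologically transitive there), and the flow-box coordinates only control orbits for time $(-\varepsilon,\varepsilon)$: orbits leave $V$ and may return in an uncontrolled way, so the ``$R$-saturated neighborhood of a smaller slab'' has no useful structure in general. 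Keeping $\omega$ fixed and modifying only $\lambda$ cannot work in this generality.

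The paper's proof resolves exactly this point by first changing $\omega$ (through a stable homotopy, i.e.\ a $C^1$-small cohomologous perturbation): a Hamiltonian $C^1$-closing lemma produces a periodic Reeb orbit $\gamma$ inside the interior of $\{\d\lambda=0\}$, and a further perturbation (as in \cite[Proposition 47]{C}) makes the first return map the identity on a small transverse disk, so that a whole neighborhood $S^1\times D^{2n}$ of $\gamma$ is foliated by closed orbits $S^1\times\{pt\}$. Only then does one run your final step: a cutoff $F$ depending on the disk coordinates alone is automatically constant along orbits, and $\lambda_s=\d z+\d((1-sF)g)+sF\,x_i\d y_i$ gives the desired open set where $\d\lambda_1=\widehat\omega$. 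If you want to keep your argument, you must incorporate this preparatory deformation of $\omega$ (or otherwise produce a compact invariant set near which the flow is periodic); without it the one-form $\mu$ you need does not exist in general.
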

\begin{proof}
    We will first show by a simple closing lemma argument that, up to stable homotopy, we can assume that the Reeb field admits a periodic orbit contained in the interior of the set $\mathcal{O}_{\lambda}=\{d\lambda=0\}$. Then, we will modify the \shs so that the first return map along this periodic orbit is the identity in a small enough transverse disk, and use this to deform $\lambda$ to be a contact form there.
    
    If there is no periodic orbit in $\mathcal{O}_{\lambda}$, we argue as follows. 
    The set $U$, which we assume to be connected, belongs to a connected component $V$ of $\mathcal{O}_{\lambda}$, and $V$ is invariant by the flow $\varphi_X^t$ of the Reeb field $X$ of $(\lambda,\omega)$.
    The interior of $V$ is also invariant, since if a point admits a small neighborhood where $d\lambda=0$, this will hold for any point in the orbit of $X$ through $p$ by continuity of the flow and the fact that $\d\lambda$ is itself invariant under the flow.
    
    Consider then a Hamiltonian flow-box neighborhood $B=[-1,1]\times D^{2n}$ contained in $U$, i.e. with coordinates $z,x_i,y_i$ such that $X$ is parallel to $\pp{}{z}$ and $\omega= \sum_{i=1}^n dx_i\wedge dy_i$. 
    Choose a Poincaré recurrent point $p=(0,q)\in B$, i.e.\ such that there is some time $\tau>0$ with $\varphi_\tau(p)=(0,q')$ for some point $q'$ different from $q$. 
    For later use, we define $U'$ to be an open set diffeomorphic to a solid torus, given by the union of a small neighborhood of $\{\varphi_t(q), t\in [0,\tau]\}$ and a small neighborhood of a segment $(0,q_s), s\in [0,1]$ inside $\{0\}\times D^{2n}\subset B$, with $q_0=q$ and $q_1=q'$.  
    As argued before, the set $\varphi[0,\tau]$ is contained in the interior of $V$, so we can assume that $U'$ is also in the interior of $V$.
    
    Equip $M\times (-\varepsilon,\varepsilon)$ with the two-form $\Omega= \omega + d(t\lambda)$, where $t$ is the coordinate in the second factor. This form is symplectic for $\varepsilon$ small enough. The Hamiltonian structure $\omega$ is obtained by restriction of $\Omega$ to $H^{-1}(0)$, where the Hamiltonian is $H=t$. 
    The Hamiltonian $C^1$-closing lemma\footnote{See the precise statement we use in \cite[Section 2.3 ``Closing lemma pour les Hamiltoniens dans les surfaces d'énergie"]{Ar}.} \cite{PR} implies that there exists a Hamiltonian function $\widetilde H$ that is arbitrarily $C^2$-close to $H$, such that $\widetilde H=H$ everywhere besides in $U'\times (-\delta,\delta)$ for an arbitrarily small $\delta<\varepsilon$, and also satisfying the following property: if $\iota:M \rightarrow M\times (-\varepsilon,\varepsilon)$ denotes the embedding such that $\iota(M)=\widetilde H^{-1}(0)$, the characteristic foliation of the Hamiltonian structure $\widetilde \omega= \iota^*\Omega$ admits a closed orbit in $U'$. 
    Notice that $\widetilde \omega$ is arbitrarily $C^1$-close to $\omega$ and equal to it away from $U'$, and hence $\lambda$ is also a stabilizing one-form of $\widetilde \omega$. 
    Furthermore $\widetilde \omega$ is cohomologous to $\omega$ and thus $(\lambda_r=\lambda, \omega_r=(1-r) \omega + r \widetilde \omega), r\in [0,1]$ defines a stable homotopy. 
    By a slight abuse of notation, we keep denoting by $\omega$ the new Hamiltonian structure $\widetilde \omega$.  
    \\

    This shows that we can assume that $(\lambda,\omega)$ admits a periodic orbit $\gamma$ contained in the interior of $\{p\in M \mid d\lambda|_p=0\}$. 
    The arguments in \cite[Proposition 47]{C}, which work in any dimension, then show that we can $C^1$-perturb $\omega$ to another cohomologous Hamiltonian structure $\widehat \omega$ such that $\widehat \omega=\omega$ away from a small neighborhood of a point $p\in \gamma$ and that the following property is satisfied: 
    in a small neighborhood of the origin of a small disk transverse to $\gamma$, the first return map of any vector field spanning $\ker \widehat \omega$ is the identity. 
    We choose $\widehat \omega$ sufficiently $C^1$-close to $\omega$ so that a linear interpolation between both forms is a homotopy of maximally non-degenerate two-forms.
    As $\lambda$ is closed near $p$, it is also a stabilizing one-form of $\widehat \omega$ and $(\lambda,\widehat \omega)$ is then stable homotopic to $(\lambda,\omega)$. 
    We can find coordinates $(\psi,x_i,y_i)$ on a neighborhood $V\cong S^1\times D^{2n}$ of $\gamma$ such that $\widehat \omega= \sum_{i=1}^n dx_i\wedge dy_i$ and $\lambda= dz + dg$ with $g\in C^\infty(D^{2n})$. 
    Finally, let $F:D^{2n}\longrightarrow \mathbb{R}$ be a cut-off function equal to $1$ near the origin and equal to $0$ near the boundary. Consider the stable homotopy $(\lambda_s, \widehat \omega)$ where
    $$\lambda_s= dz + d((1-sF)g) + sF x_idy_i.$$
    For $s=1$, the one-form $\lambda_1$ satisfies $d\lambda=\widehat \omega$ in a small neighborhood of $\gamma$. This proves that $(\lambda, \omega)$ belongs to a regular stable homotopy class. 
\end{proof}
A particular application of the previous lemma is that symplectic mapping tori, i.e. stable Hamiltonian structures such that $\lambda$ is closed, belong to a regular stable homotopy class. Products of contact and symplectic manifolds are also regular:
\begin{lemma}
    Let $\omega$ be a Hamiltonian structure on $M=N\times W$ with $\omega=d\alpha\oplus \widehat \omega$ where $d\alpha$ is a contact type two form in $N$ and $(W,\widehat \omega)$ is a symplectic manifold. 
    Then $\omega$ belongs to a regular stable homotopy class.
\end{lemma}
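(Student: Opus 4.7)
The plan is to produce an explicit stabilizing one-form $\lambda$ for $\omega$ with $d\lambda=0$ globally, and then invoke the previous lemma. Since $(W,\widehat\omega)$ is a symplectic mapping torus, by the paper's definition recalled in the preceding paragraph there is a closed one-form $\beta\in\Omega^1(W)$ with $(\beta,\widehat\omega)$ a stable Hamiltonian structure on $W$; in particular $d\beta=0$, $\beta$ evaluates to $1$ on a vector field spanning $\ker\widehat\omega$, and $\beta\wedge\widehat\omega^m$ is a positive volume on $W$ (writing $\dim W=2m+1$). I would then set $\lambda:=p_W^*\beta$ on $M=N\times W$, where $p_W$ is the projection to $W$.

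Next I would verify that the pair $(\lambda,\omega)$ is a stable Hamiltonian structure on $M$. The two-form $\omega=p_N^*(d\alpha)+p_W^*\widehat\omega$ is closed. Since $d\alpha$ is contact type on $N$ and hence non-degenerate there (with $\dim N=2k$), we have $\ker\omega=\{0\}\oplus\ker\widehat\omega$, which is one-dimensional and spanned by the lift of the Reeb field of $(\beta,\widehat\omega)$; in particular $\omega$ has maximal rank. The containment $\ker\omega\subset\ker d\lambda$ is automatic because $d\lambda=p_W^*(d\beta)=0$. For positivity, expanding $\omega^n$ binomially with $n=k+m$, all terms vanish except the $(2k,2m)$-term since $(d\alpha)^{k+1}=0$ on $N$ and $\widehat\omega^{m+1}=0$ on $W$, so
\[
\lambda\wedge\omega^n \;=\; \binom{n}{k}\, p_N^*(d\alpha)^k\wedge p_W^*\bigl(\beta\wedge\widehat\omega^m\bigr)\;>\;0,
\]
as the wedge of a volume on $N$ (from $d\alpha$ symplectic) and a volume on $W$ (from the SHS condition on $W$).

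Since $d\lambda\equiv 0$ on $M$, in particular on a non-empty open set, the previous lemma (stable homotopy classes containing a representative with closed $\lambda$ on an open set are regular) applies directly and gives that $(\lambda,\omega)$ lies in a regular stable homotopy class. Because the stable homotopy class of a stabilizable Hamiltonian structure depends only on $\omega$, as noted in Section \ref{s:pre}, $\omega$ itself lies in a regular stable homotopy class.

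The only conceptually substantive point is extracting the closed stabilizer $\beta$ from the mapping torus structure of $W$; once $\lambda=p_W^*\beta$ is written down, the SHS axioms are routine and regularity is a formal consequence of the preceding lemma. I therefore do not anticipate a genuine obstacle.
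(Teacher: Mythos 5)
Your proof hinges on reading the hypotheses so that $N$ is even-dimensional with $d\alpha$ non-degenerate and $W$ is an odd-dimensional mapping torus carrying the closed stabilizer $\beta$. The statement as printed is dimensionally ambiguous (both factors cannot be odd-dimensional, since $M=N\times W$ must be odd-dimensional), but the paper's own proof and the sentence introducing the lemma (``products of contact and symplectic manifolds are also regular'') fix the opposite reading: $N$ is a $(2k+1)$-dimensional contact manifold with contact form $\alpha$, and $(W,\widehat\omega)$ is an even-dimensional symplectic factor, so that $\ker\omega$ is spanned by the Reeb field $R_\alpha$ lifted from the $N$-factor. Under that reading your argument breaks at the very first step: there is no closed one-form $\beta$ on $W$ to pull back, and, more fundamentally, any stabilizing one-form $\lambda$ must satisfy $\lambda(R_\alpha)>0$ everywhere, which a closed one-form cannot do in general (e.g.\ if $H^1(N)=0$ a closed form is exact and cannot be positive on a non-vanishing field of a closed manifold). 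The natural stabilizer is $p_N^*\alpha$ itself, which is contact rather than closed, so the previous lemma cannot be invoked.

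What the paper does instead is work directly with the stabilizer $\alpha$: it normalizes $\alpha$ near a circle transverse to $\ker\alpha$ in $N$ so that $\alpha_1=d\psi+\sum_i r_i^2\,d\theta_i$ on $S^1\times D^{2k}$ (a contact homotopy, hence a stable homotopy of the pairs $(\alpha_t,d\alpha_t\oplus\widehat\omega)$), takes a Darboux chart $D^{2(n-k)}\subset W$ with coordinates $(\rho_j,\varphi_j)$, and then adds the cut-off term $F\sum_j\rho_j^2\,d\varphi_j$ to $\alpha_1$. The resulting one-form still stabilizes $\omega_1$ and satisfies $d\alpha_2=\omega_1$ on a small open set, which is exactly the definition of a regular class. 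Your verification of the SHS axioms and the appeal to the previous lemma are correct under your reading of the hypotheses, but that reading reduces the lemma to a trivial special case of the preceding one and does not cover the contact-times-symplectic products the authors are actually after.
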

\begin{proof}
 We denote by $2k+1$ the dimension of $N$, and by $2n+1$ the dimension of $M$. 
 Consider the stable Hamiltonian structure $(\alpha, \omega)$. 
 By standard contact neighborhood theorems, one can choose a contact form $\alpha_1 \in C^\infty(N)$ defining $\ker \alpha$ such that there is a neighborhood $V\cong S^1\times D^{2k}\subset N$ of an embedded closed curve transverse to $\xi$ with coordinates $\psi, r_i, \theta_i$ where $\alpha_1= d\psi + \sum_{i=1}^k r_i^2d\theta_i $. Now, a homotopy of contact forms $\alpha_t=f_t\alpha$ such that $f_t$ is an everywhere positive function and $\alpha_1=f_1\alpha$ naturally induces a stable homotopy $(\alpha_t,\omega_t)=(\alpha_t, d\alpha_t \oplus \omega)$. 
 If we choose a small disk $U\cong D^{2(n-k)}\subset W$ with Darboux coordinates $\rho_i, \varphi_i$ for $\widehat \omega$, we have that in $V\times U$ the forms write as $\alpha_1= d\psi + \sum_{i=1}^k r_i^2d\theta_i $ and $\omega_1= \sum_{i=1}^k r_idr_i\wedge d\theta_i + \sum_{i=1}^{n-k} \rho_i d\rho_i \wedge d\varphi_i$. 
 Let $F:D^{2k}\times D^{2(n-k)}\to \R$ be a bump function such that $F=1$ in a small neighborhood $U'$ of the origin, and $F=0$ away from a slightly larger neighborhood of the origin. 
 The one-form 
 $$\alpha_2= \alpha_1 + \sum_{i=1}^{n-k} F\rho_i^2 d\varphi_i $$
 is a stabilizing one-form of $\omega_1$ satisfying $d\alpha_2=\omega_1$ in $S^1\times U'$. 
 Since the space of stabilizing one-forms for a given Hamiltonian structure is path-connected, we deduce that $(\alpha,\omega)$ is stable homotopic to the \shs $(\alpha_2,\omega_1)$ with an open set where $d\alpha_2=\omega_1$.
 This proves that $(\alpha,\omega)$ belongs to a regular stable homotopy class, as desired.
\end{proof}

\subsection{A degeneracy criterion for Hamiltonian structures}
\label{sec:degeneracy_criterion}

We give here a simple criterion for a Hamiltonian structure to be degenerate, in terms of the existence of a periodic orbit contained in the regular level set of a first integral of its characteristic foliation.

\begin{lemma}\label{lem:degenerate_Hamilt_Reeb}
    Let $\omega$ be a Hamiltonian structure and $X$ a vector field spanning $\ker \omega$. Let $L$ be a connected component of a regular level set of any first integral $f$ of $X$. If $X$ has a periodic orbit in $L$, then $X$ is degenerate.
\end{lemma}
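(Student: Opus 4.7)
The plan is to show that the periodic orbit $\gamma \subset L$ of $X$ is itself a degenerate closed orbit: that is, its linearized Poincaré return map at any point has $1$ as an eigenvalue. The key observation is that $L$ is a codimension-one invariant submanifold passing through $\gamma$, and such a submanifold always forces an eigenvalue $1$ in the linearized return map at a periodic orbit contained in it.

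More concretely, I would first observe that $L$ is a smooth codimension-one invariant submanifold of $X$: smoothness holds because $L$ is a component of a regular level set of $f$, and invariance follows from $df(X) = 0$, which is the defining property of a first integral. Next, I would pick a small disk $\Sigma$ transverse to $\gamma$ at a point $p \in \gamma \subset L$ and consider the Poincaré first-return map $P \colon (\Sigma, p) \to (\Sigma, p)$ of $X$. Because $f$ is a first integral, $f \circ P = f|_\Sigma$ near $p$; differentiating at $p$ gives $df_p \circ DP_p = df_p$ on $T_p\Sigma$, so the covector $df_p|_{T_p\Sigma}$ is fixed by the dual linear map $(DP_p)^\ast$. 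This restricted covector is non-zero: indeed, $df_p$ is non-zero on $T_pM$ because $L$ is a regular level set, it vanishes on the line $\langle X(p)\rangle$ because $f$ is a first integral, and $T_pM = T_p\Sigma \oplus \langle X(p)\rangle$ since $\Sigma$ is transverse to $\gamma$. Hence $(DP_p)^\ast$ has $1$ as an eigenvalue, and therefore so does $DP_p$, proving that $\gamma$ is a degenerate closed orbit of $X$.

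Finally, I would note that the conclusion does not depend on the specific vector field $X$ spanning $\ker\omega$: any other such vector field differs from $X$ by multiplication by a positive function, so it has the same oriented orbit foliation and the same linearized Poincaré return map along $\gamma$. In particular, this applies to the Reeb field of any stabilizing one-form of $\omega$, so $\gamma$ is detected as a degenerate closed orbit by every Reeb-type representative, which is the meaning of $\omega$ being degenerate in the sense used later in the proof of \Cref{thm:stably_degenerate}. I do not foresee any serious obstacle: the statement is essentially the infinitesimal consequence of the Poincaré map preserving the first integral $f$.
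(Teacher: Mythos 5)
Your proof is correct, but it takes a genuinely different route from the paper's. You observe that the transversal return map $P$ preserves the first integral, so that $(DP_p)^*$ fixes the non-zero covector $\d f_p|_{T_p\Sigma}$, which forces the eigenvalue $1$; equivalently, the map induced by $DP_p$ on the one-dimensional normal bundle of $L$ at $p$ is the identity. This is an elementary, purely dynamical argument: it uses nothing about $\omega$ beyond the existence of the first integral, and in fact proves the more general statement that a periodic orbit of \emph{any} vector field contained in a regular level set of a first integral is degenerate. The paper instead produces an eigenvector of eigenvalue $1$ \emph{tangent} to the level set: it restricts $\omega$ to $N=f^{-1}(c)$, notes that $\ker(\omega|_N)$ has rank two, spanned by $X$ and a second vector field $Y$, and combines the invariant volume form $\lambda\wedge\omega^n$ with the symplecticity of the return map on a complement of $\langle X,Y\rangle$ to conclude, via a determinant computation, that $Y(p)$ is fixed by the linearized return map. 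The two arguments detect the eigenvalue $1$ in complementary directions (yours in $T_pM/T_pL$, the paper's inside $T_pL$), and either one suffices for the application in the proof of \Cref{thm:stably_degenerate}; yours is shorter and more self-contained, while the paper's exhibits the degeneracy inside the invariant hypersurface, which is closer to the Morse--Bott picture exploited in \Cref{prop:perturbing_suspension}. Your closing remark that the return map, and hence the conclusion, is unchanged under rescaling $X$ by a positive function is also correct and makes explicit a point the paper leaves implicit.
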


\begin{proof}
    Denote by $N=f^{-1}(c)\subset M$ the regular level set of the first integral $f$.
Because values near to $c$ will also be regular for $f$, there is a neighborhood $N\times (-\epsilon,\epsilon)$ of $N\simeq N\times\{0\}$ inside $M$ such that $X$ is tangent to the level sets $N\times\{t\}$.
Let now $\lambda$ be a $1$-form such that $\lambda(X)=1$, so that $\mu\coloneqq\lambda\wedge\omega^n$ is a volume form on $M$.
Note also that by construction $\mu$ is automatically preserved by the flow of $X$.
As $X$ is tangent to all level sets $N\times\{t\}$ near $N=N\times\{0\}$, if we write near $N$ the volume form $\mu$ as $\mu=dt\wedge \mu_{N,t}$, the volume form $\mu_N=\mu_{N,0}$ is preserved by $X\vert_N$.

Note also that the restriction $\omega_N$ to $N$ has a kernel of constant rank $2$, which is more precisely spanned by $X$ and another non-zero vector field which we denote by $Y$.

Denote by $\gamma$ the closed orbit of $X$ in $N=N\times\{0\}$.
Consider a point $p\in\gamma$, and let $\Phi\colon \xi_p \to \xi_p$ the linearized first return map of $X$ at $p$, where $\xi = TN/\langle X\rangle$ over $N$.
Note then that $\xi$ has a natural splitting $\eta \oplus \langle Y\rangle $, where $\eta$ is any (arbitrary) complementary to $\langle Y \rangle$ inside $\xi$. Moreover, because $X$ preserves $\omega$ and $N$, it also preserves $\omega_N$ and hence $Y$.
In particular, $(\eta,\omega_N)$ is naturally a symplectic vector bundle over $N$. The restriction of the linearized first return map, seen as map $\Phi\colon \eta_p\oplus\langle Y(p)\rangle \to \eta_p\oplus\langle Y(p)\rangle$ splits into two blocks, one given by the invariant subspace $\langle Y \rangle$, and the other block given by $\eta$ satisfies that $\mathrm{pr}_{\eta_p}\circ\Phi\vert_{\eta_p}$ is a symplectic matrix that in particular has determinant $1$.
The fact that $\Phi$ also globally has determinant $1$, because the flow of $X$ preserves the volume form $\mu_N$ on $N$, then immediately implies that necessarily $\Phi(Y(p))=Y(p)$.
In other words, the linearized first return map along $\gamma$ has $1$ as eigenvalue, and $X$ is hence degenerate, as we wanted to show.
\end{proof}

We now proceed to construct stable Hamiltonian structures that satisfy this criterion even after perturbation.

\medskip

\subsection{Local construction of transversely hyperbolic invariant tori}
\label{sec:transversly_hyperbolic_orbits}

We want to apply Lemma \ref{lem:degenerate_Hamilt_Reeb} to every \shs close to a given one. This would imply in particular that the presence of degenerate periodic orbits holds in some open set of stable Hamiltonian structures. 
Degenerate periodic orbits do not persist under perturbations of a vector field, however, notice that we are perturbing the pair $(\lambda,\omega)$ and not the Reeb vector field directly. 
The next proposition shows that there exists a particular construction of a \shs that always admits a periodic orbit in a level set of one of the integrals introduced in Lemma \ref{lem:first_integrals}. 
The idea is to construct a \shs that has, for $s\in (-\delta,\delta)$, an $s$-parametric family of invariant tori $T_s$, each contained in the regular level sets $f^{-1}(s)$ of one of the integrals $f$, and these tori are moreover normally hyperbolic invariant submanifolds of the Reeb flow restricted to $f^{-1}(s)$. We do it in a way that the ``rotation vector" of the flow along $T_s$ varies with $s$; we arrange this property as this will later be used to deduce that any \shs $(\lambda',\omega')$ close to this one will necessarily have a closed orbit in a level set of the first integral $f'$ (a perturbation of $f$) of its Reeb field. 

For the next statement, let us recall that a closed orbit of a vector field is called \emph{hyperbolic} if the linearized first return map has no eigenvalue with absolute value equal to $1$. 

\begin{prop}
    \label{prop:perturbing_suspension}
    Let $N$ be a closed $(2n-1)$-dimensional manifold, with $n\geq 2$, and $\alpha$ be a contact form on it for which there exists a closed hyperbolic Reeb orbit $\gamma \subset N$. 
    Consider the \shs $\left(\lambda,\omega\right)=\left(\d\psi ,\d\left(e^t\alpha\right)\right)$ on $M=(-1,1)_t\times N \times S^1$, where $\psi$ is an angle coordinate in the $S^1$ factor.
    Then, there are $\delta>0$ and a \shs $(\lambda',\omega')$ on $(-1,1)_t\times N \times S^1$ satisfying the following properties:
    \begin{enumerate}
        \item\label{item:perturbing_suspension_equality_near_boundary} $(\lambda',\omega')=(\lambda,\omega)$ on 
        $\left(\left(-1,-1+\delta\right)\cup\left(1-\delta,1 \right) \right)
        \times N \times S^1$;
        \item\label{item:perturbing_suspension_homotopy} $(\lambda',\omega')$ is stable homotopic to $(\lambda,\omega)$ relative to 
        $\left(\left(-1,-1+\delta\right)\cup\left(1-\delta,1\right)\right) \times N
        \times S^1$;
        \item\label{item:perturbing_suspension_persisting_S1_fam_closed_orbits} if $(\widehat \lambda,\widehat \omega)$ is a stable Hamiltonian structure that is sufficiently $C^2$-close to $(\lambda',\omega')$, then its Reeb vector field $\widehat R$ admits a periodic orbit contained in the regular level set of the first integral $$g= \frac{d\widehat \lambda\wedge \widehat \omega^{n-1}}{\widehat \omega^n}$$ of $\widehat R$ contained in the interior of $M$.
    \end{enumerate}
\end{prop}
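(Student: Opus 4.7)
The plan is to build $(\lambda',\omega')$ as a localized deformation of $(\lambda,\omega)$ supported in $(-\delta,\delta)\times N\times S^1$, so that the Reeb field $R'$ acquires a smooth one-parameter family of $2$-dimensional invariant tori $T_{t_0}=\{t_0\}\times\gamma\times S^1$, each lying in a regular level set of the first integral $g$ of \Cref{lem:first_integrals}, normally hyperbolic therein, and carrying a rigid rotation whose rotation number varies nontrivially with $t_0$. The persistence of such a family under $C^2$-small perturbations, combined with the continuity of the rotation number, will force rational rotation numbers---and hence closed orbits---to appear in regular levels of $\hat g$ as well.

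For the explicit model, I would fix smooth profile functions $F, G, k\colon(-1,1)\to\R$ equal to $1, 0, 0$ respectively outside $(-\delta,\delta)$, and set
$$\lambda' = \d\psi + k(t)\alpha, \qquad \omega' = \d\bigl(F(t)e^{t}\alpha + G(t)\,\d\psi\bigr).$$
A direct wedge computation shows that $\ker\omega'=\langle\partial_\psi-\eta(t)R_\alpha\rangle$ with $\eta=G'/(Fe^t)'$, where $R_\alpha$ is the Reeb field of $\alpha$, and that the stability condition $\ker\omega'\subset\ker\d\lambda'$ reduces to the algebraic identity $k'\eta\equiv 0$. I then pick profiles so that the supports of $G'$ and $k'$ are disjoint subsets of $(-\delta,\delta)$, with $k\equiv k_0\neq 0$ on some subinterval $(-\delta',\delta')$ that contains the support of $G'$, and so that on that subinterval $(Fe^t)'>0$ and $\eta$ is strictly monotonic and nonvanishing; the amplitudes of $k_0$ and $G$ are taken small enough that $\lambda'\wedge\omega'^n>0$ throughout $M$. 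Property (\ref{item:perturbing_suspension_equality_near_boundary}) is then immediate by construction, and the linear path $\bigl(\d\psi+sk\alpha,\ \d((1+s(F-1))e^t\alpha+sG\,\d\psi)\bigr)$, $s\in[0,1]$, gives the stable homotopy to $(\lambda,\omega)$ required in (\ref{item:perturbing_suspension_homotopy}).

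On $(-\delta',\delta')$ the same wedge computation yields $g(t)=(n-1)k_0/(nF(t)e^t)$, strictly monotonic in $t$, so each slice $\{t=t_0\}$ is a regular level of $g$. The Reeb field is a positive multiple of $\partial_\psi-\eta(t)R_\alpha$, which is tangent to $T_{t_0}=\{t_0\}\times\gamma\times S^1$ because $R_\alpha$ is tangent to $\gamma$. Normal hyperbolicity of $T_{t_0}$ inside $g^{-1}(g(t_0))\cong N\times S^1$ is inherited directly from the hyperbolicity of $\gamma$ as a closed Reeb orbit of $\alpha$: transversely to $T_{t_0}$, the linearization of $R'\vert_{N\times S^1}$ is a nonzero scalar multiple of the linearized Reeb flow of $\alpha$ across $\gamma$, while the $\partial_\psi$ direction only contributes a neutral tangential factor. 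In a framing by arclength along $\gamma$ and by $\psi\in\R/2\pi\Z$, the first-return map of $R'\vert_{T_{t_0}}$ to the transverse circle $\{\psi=0\}\cap T_{t_0}$ is a rigid rotation whose angle is an explicit nonconstant function of $\eta(t_0)$, and therefore sweeps through an open interval as $t_0$ ranges in $(-\delta',\delta')$.

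Now let $(\hat\lambda,\hat\omega)$ be sufficiently $C^2$-close to $(\lambda',\omega')$. Then $\hat R$ is $C^2$-close to $R'$ and $\hat g$ is $C^1$-close to $g$, so the regular values of $\hat g$ cover an open interval around $g((-\delta',\delta'))$ and each level $\hat g^{-1}(s_0)$ in that range is a smooth hypersurface $C^1$-close to $g^{-1}(s_0)$, to which $\hat R$ is tangent by \Cref{lem:first_integrals}. After identifying $\hat g^{-1}(s_0)$ with $N\times S^1$ by a fixed tubular-neighborhood diffeomorphism, the smooth family $s_0\mapsto\hat R\vert_{\hat g^{-1}(s_0)}$ is $C^1$-close to the unperturbed family, so parameterized persistence of normally hyperbolic invariant submanifolds (\Cref{rem:NHSparam}, applied to the time-$T$ maps of the flows for any fixed $T>0$) produces a smooth family $\{\hat T_{s_0}\}$ of invariant $2$-tori, one in each $\hat g^{-1}(s_0)$. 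On each $\hat T_{s_0}$ the first-return map to a chosen transverse circle is $C^1$-close to the original rigid rotation, so by \Cref{prop:rotcont} its rotation number $\hat\tau(s_0)$ is $C^0$-close to $\tau(t_0)$ and still sweeps through a nontrivial interval; hence $\hat\tau(s_0)\in\mathbb{Q}$ for some $s_0$, and \Cref{prop:rotperiodic} delivers a closed orbit of $\hat R$ on $\hat T_{s_0}\subset\hat g^{-1}(s_0)$ in the interior of $M$, establishing (\ref{item:perturbing_suspension_persisting_S1_fam_closed_orbits}). The hardest piece will be reconciling the various algebraic and analytic constraints on $F, G, k$---the SHS conditions along the full stable homotopy, disjointness of the supports of $k'$ and $G'$, strict monotonicity of $g$, and non-constancy of $\eta$ on $(-\delta',\delta')$---so that they hold simultaneously.
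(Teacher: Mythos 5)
Your proposal is correct in substance, but it reaches the goal by a genuinely different construction than the paper. The paper realizes the family of invariant tori by re-gluing the $S^1_\psi$ factor: it replaces the product $\Z$-action by the twisted action $(t,q,\psi)\mapsto(t,\phi_{R_\alpha}^{\chi(t)}(q),\psi-1)$, keeps $\omega=\d(e^t\alpha)$, and only adds $h(t)\alpha$ to $\lambda$ so that the first integral $\tfrac1n e^t[h'+(n-1)h]$ has regular level sets; the Reeb flow is then literally a suspension whose return map on $\{t=c\}$ is $\phi_{R_\alpha}^{\chi(c)}$, with $\chi(c)$ near the period $\tau$ of $\gamma$, so the circles $\{c\}\times\gamma$ are normally hyperbolic and carry a rotation of nonconstant angle. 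You instead stay on the fixed product and tilt the characteristic line field by adding $\d(G(t)\,\d\psi)=G'\,\d t\wedge\d\psi$ to $\omega$, which makes $\ker\omega'=\langle\partial_\psi-\eta(t)R_\alpha\rangle$ and turns the return map on the torus into the small-time map $\phi_{R_\alpha}^{-\eta(t)}$; the term $k(t)\alpha$ plays the role of the paper's $h(t)\alpha$ for the first integral. Your wedge computations (kernel of $\omega'$, stability reducing to $k'G'\equiv 0$, $g=(n-1)k_0/(nFe^t)$ on the relevant interval) check out, and the persistence/rotation-number endgame is identical to the paper's. What each approach buys: the paper's twisted suspension makes the invariant tori and their rotation numbers transparent but requires constructing the identifications $\Phi_s$ between the deformed quotients; yours avoids any change of the underlying manifold and keeps everything explicit, at the price of juggling the compatibility of $F,G,k$. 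Two small points to tidy up: (i) you cannot have $\eta=G'/(Fe^t)'$ nonvanishing and strictly monotonic on an interval \emph{containing} $\operatorname{supp}(G')$, since $G'$ vanishes at the ends of its support --- just restrict to a compact subinterval $[a,b]$ interior to $\operatorname{supp}(G')$ on which $G'$ is bounded away from $0$ and $\eta$ is strictly monotonic, which is all the argument uses; (ii) note that normal hyperbolicity of $T_{t_0}$ holds for the time-$(-\eta)$ Reeb map for \emph{any} $\eta\neq0$ (Floquet theory for the hyperbolic orbit $\gamma$), but the rates degenerate as $\eta\to0$, which is why the uniform lower bound $|\eta|\geq\eta_0>0$ on $[a,b]$ is needed for the persistence step.
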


\begin{proof}
    We look at $(-1,1)_t\times N \times S^1_\psi$ as the quotient of $(-1,1)_t\times N\times \R_\psi$ under the $\Z$-action $\rho$ generated by $1\cdot (t,q,\psi) = (t,q,\psi-1)$.
    
    Let $\tau$ be the period of the periodic orbit $\gamma$. Choose some small $\delta>0$ and let $\chi\colon (-1,1)\to [0,\tau]$ be a cut-off function that is equal to $0$ on $(-1,-1+\delta]$ and on $[1-\delta,1)$, equal to $\tau$ at $0$, strictly increasing on $(-1+\delta,0)$, and strictly decreasing on $(0,1-\delta)$.
    Denote also by $\phi_{R_\alpha}^s$ the flow of the Reeb vector field $R_\alpha$ of $\alpha$ on $N$ at time $s$.
    We then consider the quotient $X$ of $(-1,1)_t\times N \times \R_\psi$ by the (new) $\Z$-action $\rho'$ generated by $1\cdot (t,q,\psi)=(t,\phi_{R_\alpha}^{\chi(t)}(q),\psi-1)$.

    There is a homotopy of actions $(\rho_s)_{s\in[0,1]}$ from $\rho$ to $\rho'$ relative to the region $((-1,-1+\delta]\cup[1-\delta,1))\times N\times \R$:
    this just follows from the fact that one can find a $C^\infty$-small linear (in $s$) interpolation $(\chi_s)_{s\in[0,1]}$ of cut-off functions between $\chi_0=0$ and $\chi_1=\chi$.
    Then, by looking at the fiber bundle 
    \[
    \cup_{s\in[0,1]} X_s \to [0,1] \, ,
    \text{ where } \,
    X_s = ((-1,1)\times N \times \R)/\rho_s \, ,
    \]
    we can find a $[0,1]$-family of diffeomorphisms $\Phi_s\colon X_s\xrightarrow{\sim} (-1,1)\times N \times S^1$ which starts at the identity $\Phi_0=\Id\colon X_0 \xrightarrow{=}(-1,1)\times N \times S^1$.
    This can be done for instance by considering the parallel transport (starting at the fiber over $s=0$) of any auxiliary connection on $\cup_s X_s\to [0,1]$.
    For later use, we also note that said parallel transport from the fiber over a certain $s_0$ to that over some other $s_1$ sends the slice $\{t=t_0\}$ over $s=s_0$ to $\{t=t_0\}$ over $s=s_1$. 
    This is true because $\cup_s X_s$ is obtained also as the quotient of $[0,1]_s\times(-1,1)_t\times N \times \R$ by the action $\varrho$ given by $1\cdot(s,t,q,\psi)=(s,\rho_s(t,q,\psi))$.

    \medskip
    
    We now consider in $(-1,1)\times N \times \R_\psi$ a pair of the form 
    \begin{equation}\label{eq:firstSHS}
        (\widetilde \lambda, \widetilde \omega)= \left(\d\psi + h(t)\alpha , \d\left(e^t\alpha\right)\right),
    \end{equation}
    where $h$ is a non-zero compactly supported function in $(-1,1)$. 
    First, note that this is a stable Hamiltonian structure: 
    indeed, $\d\widetilde{\lambda} = {h}'(t)\d t\wedge \alpha + {h}(t)\d \alpha$, so that $\ker(\widetilde\omega)=\langle \partial_\psi\rangle\subset \ker(\d\widetilde\lambda)$ and 
    \[
    \widetilde\lambda\wedge \widetilde\omega^{n} = n e^{nt} \d\psi \wedge \d t \wedge \alpha \wedge \d\alpha^{n-1} >0 \, .
    \]
    We also note that 
    \begin{equation}
    \label{eqn:lemma_perturbing_suspension_reg_value}
    \begin{split}
    \d\widetilde\lambda\wedge \widetilde\omega^{n-1} = & \, 
    e^{(n-1)t}
    ({h}' \d t \wedge \alpha + {h} \d\alpha)\wedge 
    (\d t \wedge \alpha + \d\alpha)^{n-1} \\
    = &\,
    e^{(n-1)t} [{h}'+(n-1){h}] \d t \wedge\alpha \wedge \d\alpha^{n-1} \\
    = & \,
    \frac{1}{n}e^t[{h}' + (n-1){h})] \,\widetilde\omega^n
    \,.
    \end{split}
    \end{equation}
    For later use, we choose ${h}$ so that $e^t[{h}'+(n-1){h}]$ has $0$ as a regular value.
    Notice that $(\widetilde{\lambda} , \widetilde{\omega} )$ is in fact $\rho_s$-invariant for every $s\in[0,1]$.
    In particular, we get well-defined stable Hamiltonian structures $(\widehat\lambda_s,\widehat\omega_s)$ on $X_s$, for every $s\in [0,1]$. 
    In fact, $(\widehat\lambda_0,\widehat\omega_0)$ is nothing else than the pair of forms induced by $(\widetilde \lambda, \widetilde \omega)$ on $X_0=(-1,1)\times N \times S^1$. 
    This induces a one-parameter family of stable Hamiltonian structures $(\lambda_s,\omega_s)$ on $(-1,1)\times N\times S^1$, given by the pullbacks $(\Phi_s^{-1})^*(\widehat\lambda_s,\widehat\omega_s)$.
    We claim that $(\lambda', \omega') = (\lambda_1, \omega_1)$ satisfies the conclusion of the theorem.

    \medskip
    
    First, by construction $(\lambda_s,\omega_s)$ coincides with $(\lambda,\omega)$ on $((-1,-1+\delta)\cup(1-\delta,1))\times N \times S^1$. 
    In particular, \Cref{item:perturbing_suspension_equality_near_boundary} is satisfied.
    
    Secondly, the pair $(\lambda,\omega)$ is clearly homotopic to $(\widetilde \lambda_0, \widetilde \omega_0)$ (simply consider a homotopy of functions $h_s(t)$ such that $h_1=h$ and $h_0\equiv 0$ and pairs of the form \eqref{eq:firstSHS}), thus $(\lambda,\omega)$ and $(\lambda',\omega')$ are homotopic. Thus \Cref{item:perturbing_suspension_homotopy} also holds.

    \medskip
    
    It is then only left to argue that \Cref{item:perturbing_suspension_persisting_S1_fam_closed_orbits} holds.
    Let $Z$ be the Reeb field of $(\lambda',\omega')$. 
    It is tangent to the hypersurfaces $\{t=c\}$, for $c\in (-1+\delta, 1-\delta)$, and we claim that there is a family of invariant tori $T_c\subset \{t=c\}$ of $X$ which are normally hyperbolic in $\{t=c\}$, for $c$ sufficiently close to $0$. 
    Indeed, notice that the Reeb vector field of $(\widetilde\lambda,\widetilde\omega)$ on $(-1,1)\times N \times \R$ is tangent to the slices $\{t=\const\}$ and, after taking the quotient by $\rho$ and seeing it as a vector field on $X_1$, it is still a suspension flow, with global cross-section diffeomorphic to $(-1,1)\times N$ (obtained from the quotient of $\{\psi=0\}\subset (-1,1)\times N \times \R$ under the action $\rho_1$). The first return map of the Reeb field is conjugate to
    \begin{align*}
        \psi \colon (-1,1)\times N &\longrightarrow (-1,1)\times N\\
    (t,q) &\longmapsto \left(t,\phi_{R_\alpha}^{\chi(t)}(q)\right).
    \end{align*}
    We claim that the closed hyperbolic orbit $\gamma$ of $R_\alpha$ (from the statement) gives rise to an invariant torus $T_c$ of $Z$ in each $\{t=c\} \subset X_1$. 
    Indeed, the first return map $\psi$ preserves the circle $S_c=\{c\}\times \gamma \subset (-1,1)\times N$ on each slice $\{t=c\}$ since $\gamma$ is a closed orbit of the Reeb field of $\alpha$. 
    The vector field $Z$, which is conjugate to the suspension of $\psi$, has then for each circle $S_c$ a corresponding invariant torus $T_c=\{c\}\times\gamma \times S^1\subset X_1$. 
    The foliation induced on this torus is rational, respectively irrational when $\chi(t)$ is a rational, respectively irrational, number. 
    Notice that the pullback of $\omega'$ to $\{t=c\}$ has a kernel generated by $Z$ and by the Reeb field of the contact form $\alpha$ on each $\{c\}\times N \times \{\psi\}$. 
    In particular, the normal directions of the torus $T_c$ inside $\{t=c\}$ coincide with the normal directions of $\{c\}\times \gamma\times \{\psi\}$ inside $\{c\}\times N \times \{\psi\}$ for each $\psi\in S^1$. 
    The hyperbolicity of $\gamma$ together with the fact that the linearized Poincar\'e map of the Reeb flow is symplectic ensures that at any point $q\in \gamma$, the differential $d\phi_{R_{\alpha}^\tau}$ has three eigenvalues, namely $1$ (in the $R_{\alpha}$ direction), $\lambda$ and $\frac{1}{\lambda}$ for some $\lambda>1$.
    Notice that $\phi|_{t=0}$ is conjugate to $\phi_{R_\alpha}^\tau$, which implies that $S_0$ is a normally hyperbolic invariant submanifold of $\phi|_{\{t=0\}}$. Since $\chi(t)$ is close to the period $\tau$ of $\gamma$ for $t\in (-\delta',\delta')$, the diffeomorphism $\phi|_{\{t=c\}}$ is close to $\phi|_{\{t=0\}}$ for any $c\in (-\delta',\delta')$, and thus the circles $S_c$ are normally hyperbolic invariant submanifolds of $\psi|_{\{t=c\}}$ for $\delta'$ small enough. 
    Hence each torus $T_c$ with $ c\in (-\delta',\delta')$ is a normally hyperbolic invariant submanifold of $X$ inside $\{t=c\}$. Notice as well that $\chi(t)$ is non-constant near $t=0$, and thus the first return map along $S_c$ is a rotation of non-constant angle for $c\in (-\delta',\delta')$. In particular, the rotation number of $\psi|_{S_c}$ is non-constant for $c\in (-\delta',\delta')$.\\

    Consider now $(\widehat \lambda,\widehat \omega)$ a stable Hamiltonian structure which is $C^2$-close to $(\lambda',\omega')$.
    Then $\d \widehat \lambda \wedge \widehat \omega^{n-1}= g \cdot \widehat \omega^n$, for a function $g$ that is $C^1$-close to the function 
    $$f:=\frac{1}{n}e^t[h'+(n-1)h]$$
    in Equation \eqref{eqn:lemma_perturbing_suspension_reg_value}, on the quotient $(-1,1)\times N\times S^1$ of $(-1,1)\times N \times \R$ via $\rho_1$. 
    In particular, the regular level sets $g^{-1}(t)$ with $t\in (-\delta',\delta')$ are $C^1$-close to the regular level sets $f^{-1}(t)$ of $f$ with $t\in (-\delta',\delta')$. Each connected component of any of these level sets is of the form $\{c\}\times N \times S^1$ for some $c\in (-\delta', \delta')$. Thus, up to a $C^1$-small isotopy, we can assume that each connected component of $g^{-1}(t)$ with $t\in(-\delta',\delta')$ is of the form $\{c\}\times N \times S^1$.
    According to \Cref{lem:first_integrals}, the Reeb field $\widehat R$ of $(\widehat \lambda,  \widehat \omega)$ is then tangent to these level sets,  and on each $\{t=c\}$ with $c\in (-\delta',\delta')$ the dynamics is $C^1$-close to the dynamics of the Reeb vector field $R'$ of $(\lambda',\omega')$ along $\{t=c\}$.
    Thus $\widehat R$ still admits a global cross-section $(-\delta',\delta')\times N\times \{0\}$ where the first return map $\widehat F$ is $C^1$-close to $\psi$.
    Notice then that the first return maps $\widehat F_c=\widehat F|_{\{t=c\}}$ is a one-parametric family of diffeomorphisms of $N$ which are $C^1$-close to $F_c:=\phi_{R_\alpha}^{\chi(c)}$. 
    The parametric family of diffeomorphisms $F_c$ admits a parametric family of normally hyperbolic invariant circles for $c\in (-\delta',\delta')$, namely $S_c=T_c\cap\{\psi=0\}$, and hence by the (parametric) persistence of normally hyperbolic invariant submanifolds (\Cref{thm:normal_hyperbolic_stable} and \Cref{rem:NHSparam}) there is a parametric family of circles $\widehat S_c$, in general of $C^1$-regularity, invariant by $\widehat F_c$ for $c\in (-\delta',\delta')$. 
    For the flow of $\widehat R$, this implies the existence of a family of normally hyperbolic invariant tori.
    
    Since the rotation number of $\psi|_{S_c}$ is non-constant for $c\in (-\delta',\delta')$ and the dynamics of $\widehat F_c$ along $\widehat{S}_c$ is $C^0$-close to $\psi|_{S_c}$, \Cref{prop:rotcont} implies that the rotation number of $\widehat F|_{\widehat S_c}$ for $c\in (-\delta',\delta')$ is also non-constant. 
    The dynamics of $\widehat F$ along each $\widehat S_c$ might no longer be conjugate to a rotation, as it was in the case of $F$ along $S_c$ for the unperturbed flow; however, by the fact that the rotation number of $\widehat F_c$ along $\widehat S_c$ is not constant, there must be (infinitely many) values of $c$ for which this number is rational.
    By \Cref{prop:rotperiodic}, for any such value $c$ there must be a periodic point of $\widehat F_c$ in $\widehat S_c$. 
    These periodic points give rise to periodic orbits of $\widehat R$.
    This concludes the proof of \Cref{item:perturbing_suspension_persisting_S1_fam_closed_orbits}.
 \end{proof}
 
Observe that any stable Hamiltonian structure $C^2$-close to $(\lambda',\omega')$ in the statement of Proposition \ref{prop:perturbing_suspension} is degenerate by Lemma \ref{lem:degenerate_Hamilt_Reeb}.
We also underline the fact that the proof above relies in a fundamental way on \Cref{lem:first_integrals}, i.e.\ the existence of natural first integrals of the Reeb field of stable Hamiltonian structures. Further consequences of the existence of these first integrals will be explored in future work.

\medskip

\subsection{Proof of \Cref{thm:stably_degenerate}}
\label{sec:proof_stably_degenerate}
To prove the statement, we will insert the local construction obtained in Proposition \ref{prop:perturbing_suspension} via a stable homotopy.\\

Let $(\lambda,\omega)$ be a stable Hamiltonian structure in $M$; since we are in a regular stable homotopy class by assumption, we can arrange that there is a connected open subset $U\subset M$ where $\d\lambda=\pm\omega$. 
We assume here $\d\lambda = \omega$ on $U$ for simplicity, as the proof in the case where $\d\lambda=-\omega$ is completely analogous. 

Pick a closed curve $\gamma\subset U$ transverse to the contact structure $\xi=\ker\lambda|_U$ defined on $U$.
By standard contact neighborhood theorems, one can find a contact homotopy $(\lambda_s)_{s\in[0,1]}$, starting at $\lambda_0=\lambda$, supported in a neighborhood of $\gamma$, and such that in a smaller neighborhood $S^1_\psi\times D^{2n}$ of $\gamma\simeq S^1_\psi\times\{0\}$ we have $\lambda_1=\lambda' = \d\psi + \sum_{i=1}^n r_i^2 \d \theta_i$, where $\psi\in S^1$ and $(r_i,\theta_i)$ are polar coordinates on each of the $D^2$ factors of $D^{2n}=D^2\times \ldots\times D^2$.
The resulting homotopy $(\lambda_s,\omega_s)$, where  $\omega_s=\d\lambda_s$, extends as $\lambda_s=\lambda$ and $\omega_s=\omega$ away from a neighborhood of $\gamma$, and is a stable homotopy. 
 
We consider now another homotopy $(\lambda_s,\omega_s)_{s\in[1,2]}$ given by $\omega_s=\omega_1$ and 
 \[
 \lambda_s = \d\psi + \rho_s(r)\sum_{i=1}^n r_i^2\d\theta_i 
 \]
 for all $s\in[1,2]$,
 where $r$ is the radial coordinate in $D^{2n}$ and $\rho_s$ is a family of functions all equal to $1$ near $1$, such that $\rho_1\equiv 1$ and such that $\rho_2$ is equal to $0$ near $0$.
 For a small enough radius $\delta$, $(\lambda_2,\omega_2)$ just restricts to $(d\psi, 2\sum_{i=1}^n r_i dr_i \wedge d\theta_i)$ on $S^1\times D^{2n}_\delta$, and hence its Reeb flow is degenerate, as each orbit is of the form $S^1\times\{pt\}$ in such region.
 
 Now, by the Weinstein neighborhood theorem for Lagrangian submanifolds, the torus
 $T^n=S_\epsilon^1\times \ldots S_\epsilon^1 \subset D^2\times \ldots D^2 = D^{2n}$ 
 where $S^1_\epsilon$ is the circle of radius $\epsilon$ in each factor $D^2$, 
 admits an open neighborhood $S^1\times D_\delta T^*T^n\subset S^1\times D^{2n}$ on which $(\lambda_2=\d\psi, \omega_2 = \d\lambda_{std})$, 
 where $\lambda_{std}$ is the standard Liouville form on $D_\delta T^*T^n$ and $D_\delta T^*T^n$ is the subset of $T^*T^n$ made of those covectors of 
 norm (w.r.t.\ the standard flat metric for instance) at most a certain small enough $\delta>0$.

 Choose a metric $g$ on $T^n$ such that its geodesic flow has a closed orbit which is hyperbolic, such a metric exists e.g. by \cite{KT}.
 Up to rescaling, we can assume that the $g$-unit cosphere bundle $S_gT^*T^n$ is contained in $D_\delta T^*T^n$; denote by $N$ such cosphere bundle, and by $\alpha$ the naturally induced contact form on it, whose Reeb flow is exactly the geodesic flow of $g$.
 Then, for $\eta>0$ small enough, there is a neighborhood $V=[-\eta,\eta]\times S^1_\psi\times N$ of $S^1 \times N = S^1\times S_gT^*T^n\subset S^1\times D^{2n}$ where
 \[
 \lambda_2 = \d\psi \, ,
 \quad
 \omega_2 = \d(e^t\alpha) \, .
 \]

 Now, applying \Cref{prop:perturbing_suspension} to the restriction of the stable Hamiltonian structure $(\lambda_2,\omega_2)$ on $V$ gives then a further stable homotopy $(\lambda_s,\omega_s)_{s\in[2,3]}$ among stable Hamiltonian structures, relative to a neighborhood of the boundary of $V$; by extending trivially outside of $V$, this can in particular naturally be seen as a homotopy (still denoted the same) of the ambient stable Hamiltonian structure. 
 The \shs $(\lambda'\coloneqq \lambda_3,\omega'\coloneqq \omega_3)$ is then the desired stable Hamiltonian structure on $M$. 
 Indeed, by \Cref{item:perturbing_suspension_persisting_S1_fam_closed_orbits} in Proposition \ref{prop:perturbing_suspension} and Lemma \ref{lem:degenerate_Hamilt_Reeb}, any $C^2$-close \shs is necessarily degenerate. This concludes the proof of \Cref{thm:stably_degenerate}. 
 \hfill \qedsymbol \\

We finish with a remark about approximations of stable hypersurfaces by non-degenerate stable hypersurfaces.

\begin{Remark}\label{rem:approx}
    Notice that Theorem \ref{thm:stably_degenerate} does not imply that the Hamiltonian structure $\omega'$ cannot be $C^2$-approximated (or even $C^\infty$) by a sequence $\omega_n$ of non-degenerate stabilizable Hamiltonian structures. Indeed, it might be the case that the $\omega_n$ do approximate $\omega'$ but that there exists no sequence of stabilizing one-forms $\lambda_n$ of $\omega_n$ that $C^2$-converge to $\lambda'$. An example of a similar phenomenon for a smooth family of stabilizing one-forms appears in \cite[Proposition 5.5]{CV}. To state this in terms of hypersurfaces, consider the symplectization $M\times (-\varepsilon,\varepsilon)$ of $(\lambda',\omega')$, i.e. we take the symplectic form $\omega'+d(t\lambda)$ where $t$ is a coordinate in the second factor. We cannot conclude that the stable hypersurface $M\times \{0\}$ cannot be $C^2$ or $C^\infty$-approximated by non-degenerate stable hypersurfaces.
\end{Remark}

\bibliographystyle{abbrv}
\bibliography{biblio}

\end{document}